\documentclass[letterpaper]{article} 
\usepackage[utf8]{inputenc}
\usepackage{amsmath,amssymb}
\usepackage{hyperref}
\hypersetup{
    colorlinks=true,
    citecolor=blue,
}

\usepackage{float}
\usepackage{graphicx}
\usepackage{import}

\usepackage{amsthm}
\newtheorem{thm}{Theorem}
\newtheorem{lem}[thm]{Lemma}
\theoremstyle{remark}
\newtheorem*{remark}{Remark}
\theoremstyle{definition}

\numberwithin{thm}{section}

\usepackage{newpxmath}
\usepackage{newpxtext}

\usepackage{parskip}

\usepackage{empheq, xcolor}
\definecolor{myblue}{rgb}{.8, .8. 1}


\renewcommand{\d}{\delta}

\newcommand{\e}{\varepsilon}

\renewcommand{\l}{\lambda}

\newcommand{\s}{\sigma}


\newcommand{\cN}{{\mathcal N}}

\newcommand{\cS}{{\mathcal S}}


\newcommand{\bP}{{\mathbb P}}
\newcommand{\bE}{{\mathbb E}}
\newcommand{\bR}{{\mathbb R}}
\newcommand{\bC}{{\mathbb C}}


\renewcommand{\i}{\infty}

\DeclareMathOperator{\supp}{supp}
\DeclareMathOperator{\dist}{dist}
\DeclareMathOperator{\Var}{Var}
\DeclareMathOperator{\Sparse}{Sparse}

\title{Invertibility of Sparse Complex Gaussian Matrices}
\author{Edward Zeng \\ UC Berkeley \\ \href{mailto:edwardzeng@berkeley.edu}{edwardzeng@berkeley.edu}}

\begin{document}
\maketitle
\begin{abstract}
    Let $\s_n(\cdot)$ denote the least singular value of a $n \times n$ matrix. It is well-known that
    \[
        \bP[\s_n(A) \le \e] \le \e n
    \]
    if $A$ is drawn from the real Ginibre ensemble of $n \times n$ matrices and
    \[
        \bP[\s_n(A) \le \e] \le \e^2 n^2
    \]
    if $A$ is drawn from the complex Ginibre ensemble. In this paper, we will show a similar phenomenon occurs for sparse random matrices.
\end{abstract}

\section{Introduction} \label{intro}
Let $\s_n(\cdot)$ denote the least singular value of a $n \times n$ matrix. It is well-known that
\[
    \bP[\s_n(A) \le \e] \le \e n
\]
if $A$ is drawn from the real Ginibre ensemble of $n \times n$ matrices and
\[
    \bP[\s_n(A) \le \e] \le \e^2 n^2
\]
if $A$ is drawn from the complex Ginibre ensemble. (For proofs, see \cite{Ede88}.) In this paper, we will show a similar phenomenon occurs for sparse random matrices.

Let $\d_{p}$ denote the Bernoulli random variable
\[
    \bP[\d_{p} = 1] = p, \quad \bP[\d_{p} = 0] = 1 - p.
\]
In \cite{BaRu17} Basak and Rudelson prove the following result:
\begin{thm}
    \label{ru}
    Let $A$ be a $n \times n$ random matrix with i.i.d. entries $A_{ij} = \xi_{ij}\d_{ij}$ where
    \begin{enumerate}
        \item $\xi_{ij}$ and $\d_{ij}$ are all independent.
        \item $\xi_{ij}$ are i.i.d. sub-Gaussian random variables with 
            \[
                \bE\xi_{ij} = 0, \quad \bE\xi_{ij}^2 = 1, \quad \xi_{ij} \in \bR.
            \]
        \item $\d_{ij}$ are i.i.d. copies of $\d_p$.
    \end{enumerate}
    Then there exist constants $c, C, C' > 0$ depending on the sub-Gaussian norm of $\xi_{ij}$ such that for $p \ge C' \frac{\log n}{n}$ we have
    \[
        \bP[\s_n(A) \le \e] \le e^{-cnp} + C\e \exp\left( c \frac{\log{(1/p)}}{\log(np)}\right) \cdot \sqrt{\frac{n}{p}}, \quad \e \ge 0.
    \]
\end{thm}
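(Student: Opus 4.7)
The plan is to follow the Rudelson--Vershynin geometric framework, adapted to the sparse regime. After a standard decomposition, write
\[
\bP[\s_n(A) \le \e] \le \bP\bigl[\inf_{x \in \mathrm{Comp}}\|Ax\| \le \e\bigr] + \bP\bigl[\inf_{x \in \mathrm{Incomp}}\|Ax\| \le \e\bigr],
\]
where $\mathrm{Comp}(\d,\rho) \subseteq S^{n-1}$ is the set of unit vectors within Euclidean distance $\rho$ of some $\d n$-sparse vector and $\mathrm{Incomp}$ is its complement in $S^{n-1}$. The compressible piece will account for the $e^{-cnp}$ term and the incompressible piece for the $\e\sqrt{n/p}\exp(\cdots)$ term.

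For compressible vectors I would fix a $\d n$-sparse unit vector $y$, use sub-Gaussian concentration to bound $\bP[\|Ay\| \le t]$, and then take a union bound over a net of $\mathrm{Comp}$ of size $\binom{n}{\d n}(C/\rho)^{\d n}$. The sparsity-specific difficulty here is that some rows of $A$ may be identically zero: the probability that a given row has no nonzero entry is $(1-p)^n$, and the hypothesis $p \gtrsim \log n / n$ is exactly what ensures that all rows are nonzero except on an event of probability $e^{-cnp}$. Conditioning on the complementary event, a tensorization lemma combined with the $\e$-net bound closes this case.

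For incompressible vectors the key reduction is the invertibility-via-distance lemma:
\[
\inf_{x \in \mathrm{Incomp}(\d,\rho)}\|Ax\| \ge \frac{\rho}{\sqrt{n}} \min_{1 \le j \le n} \dist(X_j, H_j),
\]
where $X_j$ is the $j$th column and $H_j = \spann\{X_i : i \ne j\}$. Letting $v$ be the unit normal to $H_j$, we have $\dist(X_j, H_j) = |\sum_i \xi_{ij}\d_{ij}v_i|$, and the task becomes a small-ball estimate for this Bernoulli-thinned linear combination. I would define a Least Common Denominator tailored to the sparse model, stratify $\mathrm{Incomp}$ by LCD level, and combine an $\e$-net on each level with a sparse Littlewood--Offord / Esseen-type inequality to obtain a bound of the form
\[
\bP[|\langle X_j,v\rangle| \le \e] \lesssim \e\sqrt{n/p} \cdot \exp\!\bigl(c\tfrac{\log(1/p)}{\log(np)}\bigr),
\]
which is the source of both the $\sqrt{n/p}$ factor (the effective dimension is $np$) and the correction factor.

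The main obstacle is the sparse small-ball inequality itself, together with the argument that the normal $v$ to a random hyperplane is typically well-spread in the sparse LCD sense. When $p \ll 1$, only $\sim np$ of the coordinates $\d_{ij}v_i$ are actually nonzero, so one must show that a uniformly random subset of $np$ coordinates of an incompressible vector $v$ retains a large LCD with high probability. This random-restriction step must be tightly interlocked with the LCD stratification, and it is where the exponent $\log(1/p)/\log(np)$ is forced by the balance between the number of LCD levels and the combinatorial cost of the support pattern. The constants $c,C,C'$ will inherit their dependence on the sub-Gaussian norm through the Esseen constants in the anti-concentration step and through the tail bounds used in the compressible step.
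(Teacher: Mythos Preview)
This theorem is not proved in the paper at all: it is quoted verbatim from Basak--Rudelson \cite{BaRu17} as background for the main result (Theorem~\ref{main}), and the paper contains no argument for it. So there is no ``paper's own proof'' to compare your proposal against.

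That said, your outline is broadly the right shape for how \cite{BaRu17} actually proceeds: the compressible/incompressible split, a net argument on compressible vectors yielding the $e^{-cnp}$ term, and a distance-to-hyperplane reduction on the incompressible side driven by a sparse small-ball inequality. One inaccuracy worth flagging: the issue in the compressible case is not that entire rows of $A$ vanish (the condition $p \gtrsim (\log n)/n$ does more than just that), but rather that for a highly sparse vector $y$ the restriction of a row to $\supp(y)$ may vanish, and controlling this requires a more delicate multi-scale decomposition of the compressible set than a single $\d n$-sparse net. The present paper's Sections~\ref{hc}--\ref{mc} carry out an analogous multi-scale argument in the complex Gaussian setting, which gives a flavor of what is involved, but the actual proof of Theorem~\ref{ru} lives in \cite{BaRu17}.
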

Let us consider a special case of Theorem $\ref{ru}$ where we fix $\d \in (0, 1)$ and set 
\[
    p = n^{\d-1} = \frac{n^\d}{n}.
\]
Then there exist constants $c, C > 0$ depending on the sub-Gaussian norm of $\xi_{ij}$ such that for sufficiently large $n$ we have
\begin{align*}
    \bP[\s_n(A) \le \e] &\le e^{-cn^\d} + C\e \exp\left( c \frac{1 - \d}{\d}\right) \cdot n^{\frac{2 - \d}{2}}\\
                        &= e^{-cn^\d} + O(\e n^{\frac{2 - \d}{2}}).
\end{align*}
In this paper, we will show that the dependence on $\e$ can be upgraded to $\e^2$ if we move from real sub-Gaussian entries to complex Gaussian entries. More formally, our main result is:
\begin{thm}
    \label{main}
    Fix $\d \in (0, 1)$ and let $A$ be the $n \times n$ random matrix in Theorem \ref{ru} but with
    \[
        \xi_{ij} \sim \cN_\bC(0,1), \quad \d_{ij} \overset{d}{=} \d_{n^{\d-1}}.
    \]
    Then there exists a constant $c > 0$ such that for sufficiently large $n$ we have
    \[
        \bP[\s_n(A) \le \e] \le e^{-cn^\d} + O(\e^2n^{2-\d}), \quad \e \ge 0.
    \]
\end{thm}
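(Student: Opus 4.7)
The plan is to adapt the standard compressible/incompressible dichotomy for invertibility of random matrices, replacing the real anti-concentration step with the $2$-dimensional complex Gaussian small ball bound
\[
    \bP[|Z| \le t] \le t^2/\s^2, \quad Z \sim \cN_\bC(0,\s^2),
\]
which is exactly the input that upgrades $\e$ to $\e^2$. Decompose the complex unit sphere $S^{2n-1}$ into a set $\Sparse$ of vectors whose $\ell_2$-mass is essentially concentrated on $\le c'n^\d$ coordinates and its complement $\mathrm{Incomp}$. Chernoff on the Bernoulli mask and standard Gaussian spectral-norm bounds give a structural event, of probability at least $1 - e^{-cn^\d}$, on which every row and column of $A$ has $\Theta(n^\d)$ nonzero entries and $\|A\| = O(n^{\d/2})$; all failure modes below will be absorbed into the $e^{-cn^\d}$ term.

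For the compressible part, an $\e$-net of size $\binom{n}{c'n^\d}C^{n^\d}$ together with the Gaussian lower tail for $\|Av\|^2$ (which, for fixed $v$, is a sum of $n$ independent squared complex Gaussians with variances $\sum_j |v_j|^2 \d_{ij}$) yields a union bound of order $e^{-cn^\d}$ after balancing exponents. For the incompressible part, I would invoke the invertibility-via-distance inequality
\[
    \bP\Bigl[\inf_{v\in\mathrm{Incomp}}\|Av\| \le \e\rho/\sqrt{n}\Bigr] \le \frac{1}{n}\sum_{k=1}^n \bP[\dist(A_{*k}, H_k) \le \e]
\]
with $H_k = \spann\{A_{*j}: j \ne k\}$. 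Writing $\dist(A_{*k}, H_k) = |\langle A_{*k}, u_k\rangle|$ for $u_k$ a unit normal to $H_k$ (independent of $A_{*k}$), and conditioning on $u_k$ and on the Bernoulli mask of column $k$, the inner product is $\cN_\bC(0, \s_k^2)$ with $\s_k^2 = \sum_i |(u_k)_i|^2 \d_{ik}$. The small ball bound then gives $\bP[\dist \le \e \mid u_k, \d_{\cdot k}] \le \e^2/\s_k^2$.

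The main obstacle is to show that $\s_k^2 \gtrsim p = n^{\d-1}$ with probability at least $1 - e^{-cn^\d}$. This forces the normal vector $u_k$ itself to be incompressible: once $u_k \in \mathrm{Incomp}$, a conditional Chernoff estimate on $\sum_i |(u_k)_i|^2 \d_{ik}$ yields $\s_k^2 \gtrsim p\|u_k\|^2 = p$. Incompressibility of $u_k$ follows by reapplying the compressible-kernel net bound to the $(n-1)\times n$ submatrix of columns $\neq k$: a compressible $u_k$ would correspond to a sparse near-kernel vector, ruled out by the same estimate as the $\Sparse$ step above. Rescaling $\e \mapsto \e\sqrt{n}/\rho$ in the invertibility-via-distance inequality and summing the bound $\e^2/p$ over $k$ then yields
\[
    \bP[\s_n(A) \le \e] \le e^{-cn^\d} + O\!\left(\frac{\e^2 n}{p}\right) = e^{-cn^\d} + O(\e^2 n^{2-\d}),
\]
as claimed, with the exponential term absorbing all structural and incompressibility failure events.
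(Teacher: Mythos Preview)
Your high-level strategy matches the paper's: split the sphere into compressible and incompressible parts, dispose of the compressible part by a net, reduce the incompressible part via invertibility-by-distance to $|\langle A_{*k},u_k\rangle|$, argue that the unit normal $u_k$ is itself incompressible, and then invoke $\bP[|Z|\le t]\le t^2/\s^2$ for $Z\sim\cN_\bC(0,\s^2)$ to extract the $\e^2$ gain. The gap is in your choice of compressibility threshold, and it breaks both halves of the argument.

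You place the threshold at $c'n^\d$ coordinates. On the incompressible side this is far too weak for your Chernoff step: take $u_k$ supported on $m=c'n^\d+1$ equal coordinates, which lies in your $\mathrm{Incomp}$. Then $\s_k^2=m^{-1}\sum_{i\le m}\d_{ik}$ and $\bP[\s_k^2=0]=(1-p)^m\approx e^{-c'n^{2\d-1}}\to 1$ whenever $\d<1/2$, so the claim $\s_k^2\gtrsim p$ fails outright. To obtain $\bP[\s_k^2\le p/2]\le e^{-cn^\d}$ one needs $u_k$ to have $\Omega(n)$ coordinates of size $\Omega(n^{-1/2})$; this is exactly the paper's definition of $\cS_{IC}$. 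But moving the threshold to $\Theta(n)$ wrecks your compressible step: the net over $(cn)$-sparse unit vectors has cardinality $e^{\Theta(n)}$, while for a $1$-sparse $v$ the best available tail is $\bP[\|Av\|\le c_0 n^{\d/2}]\le e^{-cn^\d}$, nowhere near small enough for a union bound. The paper resolves this tension with a \emph{three}-way split $\cS_{HC}\cup\cS_{MC}\cup\cS_{IC}$: for $\cS_{MC}$ (mass on $\Theta(n^{1-\d})$ to $\Theta(n)$ coordinates) a Paley--Zygmund argument gives per-vector probability $(1-c)^n$, which does beat the $e^{\Theta(n)}$ net; for $\cS_{HC}$ (mass on $O(n^{1-\d})$ coordinates) a further multi-scale decomposition into levels $a_k=n^{k\d/2}$ is required, together with a ``row bound'' lemma exploiting non-cancellation in each row, with net size and tail probability balanced separately at every scale. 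A single net with a uniform Gaussian lower-tail bound cannot perform this balancing.
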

Here and throughout this paper, $\cN_\bC(0,1)$ will denote the complex normal distribution with mean 0 and variance 1.

\section{Proof Overview}
From now on, $A$ will refer to the random matrix as defined Theorem \ref{main}.

The proof of Theorem \ref{main} will require careful analysis of nets on the unit sphere. Because such arguments need upper bound estimates on $\|A\|_{op}$, we will break up the proof of Theorem \ref{main} into two parts:
\begin{thm}
    \label{mainwnorm}
    For any $K > 0$, there exists a constant $c > 0$ such that for sufficiently large $n$ we have
    \[
        \bP[\s_n(A) \le \e \text{ and } \|A\|_{op} \le Kn^{\frac{\d}{2}}] \le e^{-cn^\d} + O(\e^2n^{2 - \d}), \quad 0 \le \e \le 1.
    \]
\end{thm}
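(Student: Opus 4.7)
The plan is to follow the Rudelson--Vershynin invertibility framework adapted to the sparse complex Gaussian setting. The key point is that complex Gaussian scalars satisfy an anti-concentration bound of order $\e^2$ rather than $\e$, which is ultimately what produces the $\e^2 n^{2-\d}$ factor in the statement. I would decompose the unit sphere of $\bC^n$ into a set $\text{Comp}(\a, \rho)$ of \emph{compressible} vectors (within distance $\rho$ of some $\a n$-sparse vector) and its complement $\text{Incomp}(\a, \rho)$, so that
\[
    \bP\bigl[\s_n(A) \le \e,\ \|A\|_{op} \le Kn^{\d/2}\bigr] \le \bP[\cE_{\text{Comp}}] + \bP[\cE_{\text{Incomp}}],
\]
where $\cE_{\text{Comp}}$ is the event that some compressible $x$ realizes both $\|Ax\| \le \e$ and $\|A\|_{op} \le Kn^{\d/2}$, and $\cE_{\text{Incomp}}$ is the analogous event on incompressible $x$.

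For the compressible contribution I would run a net argument: the operator-norm bound discretizes $\text{Comp}(\a, \rho)$ by a net of cardinality $\binom{n}{\a n}(C/\rho)^{2\a n}$. For each fixed net point $x_0$, conditioning on the Bernoulli mask $(\d_{ij})$ presents $Ax_0$ as a complex Gaussian vector with $\bE\|Ax_0\|^2 = np = n^\d$, and a chi-squared-type deviation inequality gives $\bP[\|Ax_0\| \le 2\e] \le e^{-c_1 n^\d}$ for $\e \le 1$. A union bound over the net, combined with Bernoulli concentration for the mask, then yields $\bP[\cE_{\text{Comp}}] \le e^{-cn^\d}$ once $\a$ is taken small enough that $\a n \log(1/\a) \ll n^\d$.

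For the incompressible contribution I would invoke the invertibility-via-distance inequality
\[
    \bP[\cE_{\text{Incomp}}] \le \frac{1}{cn}\sum_{j=1}^n \bP\bigl[\dist(A_j, H_j) \le \e\sqrt{n}/c\bigr], \quad H_j = \spann\{A_k : k \ne j\}.
\]
Fix $j$, condition on $\{A_k\}_{k \ne j}$, and let $v \in H_j^\perp$ be a unit normal. Conditioning additionally on the mask $(\d_{ij})_{i=1}^n$, the scalar $\langle v, A_j\rangle$ is a centered complex Gaussian of variance $V = \sum_i \d_{ij}|v_i|^2$, so complex Gaussian anti-concentration gives $\bP[|\langle v, A_j\rangle| \le t] \le t^2/V$. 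Choosing $t = \e\sqrt{n}/c$ reduces the target $O(\e^2 n^{2-\d})$ bound to the requirement that $V \gtrsim p = n^{\d-1}$ with probability $1 - e^{-cn^\d}$.

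The main obstacle is this lower bound on $V$, which forces the normal vector $v$ to be sufficiently ``spread'': enough of its $\ell^2$ mass must sit on coordinates where $\d_{ij} = 1$. Since $v$ is a right singular vector of the $n \times (n-1)$ submatrix $A^{(j)}$, establishing such a spread property is a structural analogue of the very invertibility estimate we are trying to prove --- a bootstrap familiar from the dense case but sharpened here by sparsity. I expect to need a separate lemma, obtained by rerunning the compressible analysis on $A^{(j)}$, showing that any approximate kernel vector of $(A^{(j)})^*$ is incompressible except on an event of probability $e^{-cn^\d}$; combined with Bernstein-type concentration of $V$ (which has mean $p$ and, depending only on the $j$-th mask, is independent of $v$), this delivers the required bound on $V$ and closes the proof.
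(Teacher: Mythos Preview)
Your overall architecture (compressible/incompressible split, distance-to-hyperplane reduction, complex Gaussian anti-concentration for the $\e^2$ gain) matches the paper's, but the single-threshold decomposition you describe cannot close. The tension is this: to beat the net by the per-point bound $e^{-c n^\d}$ you are forced, as you say, to take $\a n \log(1/\a) \ll n^\d$, hence $\a n \ll n^\d$. But then ``incompressible'' only means ``not $\rho$-close to an $o(n^\d)$-sparse vector,'' which allows $v$ to carry all its mass on, say, $2n^\d$ coordinates. For such $v$ the variable $V=\sum_i \d_{ij}|v_i|^2$ has mean $p=n^{\d-1}$ but variance $p(1-p)\sum_i|v_i|^4 \gtrsim p\,n^{-\d}$, so Bernstein gives nothing; concretely, if $v$ is equidistributed on $m=2n^\d$ coordinates then $V=0$ with probability $(1-p)^m\approx e^{-2n^{2\d-1}}$, which for $\d<1/2$ is essentially $1$. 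So the bootstrap ``incompressibility of $v$ $\Rightarrow$ $V\gtrsim p$'' fails. Going the other way and taking $\a$ a constant (so that incompressible vectors genuinely have $\Omega(n)$ spread coordinates and $V$ does concentrate) makes the net size $e^{\Theta(n)}$, while the per-point probability on the most compressible net points---e.g.\ a single standard basis vector, where $\|Ae_1\|^2$ is a sum of only $\sim n^\d$ chi-squared terms---is only $e^{-c n^\d}$, and the union bound blows up.

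The paper resolves this with a \emph{multi-scale} refinement of the compressible set. Vectors whose mass lies on coordinates of size at most $O(n^{-(1-\d)})$ (``moderately compressible'') admit a per-row estimate $\bP[|R_1\cdot x|\le t]\le 1-c$ via Paley--Zygmund (Lemma~\ref{dotcontrol}), so the product over rows gives $e^{-cn}$, which \emph{does} beat the $e^{O(n)}$ net at this scale (Theorem~\ref{msparse}). Vectors whose mass sits on $O(n^{1-\d})$ large coordinates (``highly compressible'') require a different, cancellation-free argument (Lemma~\ref{rowbound}) together with a further dyadic decomposition in the support size (the $\cS_{HC}^k$ in the proof of Theorem~\ref{hsparse}) so that at each scale the net entropy and the per-point probability balance. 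Only after both compressible regimes are handled does ``incompressible'' mean $\Omega(n)$ coordinates of size $\sim n^{-1/2}$, at which point your $V\gtrsim p$ step (and hence the $\e^2 n^{2-\d}$ conclusion) goes through as in Theorem~\ref{incomp}. Your proposal is missing this intermediate scale and the non-standard highly-compressible argument; without them the proof does not go through for $\d<1$.
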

\begin{thm}
    \label{opbound}
    There exists constants $K, c > 0$ such that for sufficiently large $n$ we have
    \[
        \bP[\|A\|_{op} \ge Kn^{\frac{\d}{2}}] \le e^{-cn^\d}.
    \]
\end{thm}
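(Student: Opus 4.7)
The plan is to condition on the Bernoulli sparsity pattern $B = (\d_{ij})$ and combine a bound on the expected operator norm of the resulting Gaussian matrix with a Gaussian concentration inequality to obtain the sharp tail.

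First I would show that with probability at least $1 - \exp(-c_1 n^\d)$ every row and every column of $B$ contains at most $2n^\d$ ones; call this event $E_B$. Each row/column sum is Binomial with mean $np = n^\d$, so the bound follows from Chernoff plus a union bound over the $2n$ rows and columns. On $E_B$ I would then bound $\bE[\|A\|_{op} \mid B]$. Conditional on $B$, the matrix $A$ is a complex Gaussian matrix with variance profile $\sigma_{ij}^2 = \d_{ij} \in \{0,1\}$, and a Bandeira--van Handel / Seginer-type estimate yields
\[
\bE[\|A\|_{op}\mid B]\le C\Bigl(\max_i\sqrt{\textstyle\sum_j \d_{ij}}+\max_j\sqrt{\textstyle\sum_i \d_{ij}}\Bigr)+C'\sqrt{\log n}\le C_0\,n^{\d/2}
\]
on $E_B$, since each row/column variance is at most $2n^\d$ and $n^{\d/2}$ dominates $\sqrt{\log n}$ for large $n$.

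For the fluctuation I would use Gaussian concentration. With $B$ fixed, $\|A\|_{op}$ is a $1$-Lipschitz function (in Hilbert--Schmidt norm) of the independent complex Gaussian entries on the support of $B$, so
\[
\bP\bigl[\|A\|_{op}\ge \bE[\|A\|_{op}\mid B]+t \,\big|\, B\bigr]\le 2e^{-t^2/2}.
\]
Choosing $t = n^{\d/2}$ and $K = C_0 + 1$ gives $\bP[\|A\|_{op}\ge K n^{\d/2}\mid B]\le 2e^{-n^\d/2}$ on $E_B$. Unconditioning,
\[
\bP[\|A\|_{op}\ge K n^{\d/2}]\le \bP[E_B^c]+\bE\bigl[\mathbf{1}_{E_B}\bP[\|A\|_{op}\ge Kn^{\d/2}\mid B]\bigr]\le e^{-cn^\d}
\]
for a suitable $c > 0$.

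The hard part is the expected-norm bound in the second step. Citing Bandeira--van Handel resolves it in one line, but a self-contained argument requires either reproducing their chaining proof or going through the moment method: compute $\bE \operatorname{tr}((AA^*)^k)$ for $k \asymp n^\d$ as a sum over balanced closed walks on $K_{n,n}$ weighted by powers of $p$, where the dominant contribution from non-crossing tree walks is $C_k\,n\,(np)^k$ (with $C_k$ the $k$-th Catalan number), and Markov applied with $k\sim n^\d$ yields the required bound. A naive $\e$-net over the complex unit sphere does not suffice, because a single vector enjoys only $e^{-\Omega(n^\d)}$ concentration while the net has size $e^{\Omega(n)}$; this mismatch is what forces the conditional-Gaussian reduction above, and the combinatorial bookkeeping for walks with edge multiplicities $>2$ is the most delicate ingredient in any elementary proof of the Step~2 estimate.
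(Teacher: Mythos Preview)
Your argument is correct, but it takes a genuinely different route from the paper. The paper's proof is a one-line black-box reduction: it decomposes $A=\Re(A)+i\Im(A)$, observes that each part has the distribution of a real sparse Gaussian matrix (up to a factor $1/\sqrt{2}$), and then invokes Theorem~1.7 of \cite{BaRu17}, which already establishes the bound $\bP[\|A'\|_{op}\ge K'n^{\d/2}]\le e^{-c'n^\d}$ for the real sub-Gaussian sparse model. The triangle inequality $\|A\|_{op}\le\|\Re(A)\|_{op}+\|\Im(A)\|_{op}$ and a union bound finish the proof. (Minor point: the paper calls $\Re(A)$ and $\Im(A)$ ``i.i.d.''; they share the same Bernoulli mask and so are only identically distributed, but independence is never used.)

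Your approach --- condition on the sparsity pattern, control the maximum row/column degree by Chernoff, bound the conditional expectation via Bandeira--van Handel, and then apply Gaussian concentration --- is more informative about the mechanism and would work equally well for a general variance profile, but it trades the dependence on \cite{BaRu17} for a dependence on Bandeira--van Handel (or, as you note, a nontrivial moment/chaining computation). The paper simply offloads all of that work to the existing real-case result, which is why its proof of this theorem occupies only a few lines.
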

Combining Theorems \ref{mainwnorm} and \ref{opbound} with a union bound gives Theorem \ref{main}.
\begin{remark}
    Careful readers may have noticed that Theorem \ref{mainwnorm} restricts $\e$ to $[0, 1]$. It is very easy to extend the statement to all $\e \ge 0$; the added restriction will just make the proof a little simpler.
\end{remark}

\subsection{Theorem \ref{mainwnorm} Proof Overview}
As the proof of Theorem \ref{mainwnorm} comprises the bulk of this paper, we outline it here.

 Let $Y_i$ denote the $i$-th column of $A$ and let $W_i$ denote the span of the $Y_j$'s for $j \neq i$. Using standard arguments, we show it suffices to prove that it is unlikely for $\dist(Y_1, W_1)$ to be small. Observe that
\begin{align}
    \label{overview-dist-small-equiv}
    \dist(Y_1, W_1) \ \text{small} &\iff \left|\langle Y_1, \eta_1\rangle\right| \ \text{small for any} \ \eta_1 \in W_1^\perp, \ \|\eta_1\|_2 = 1.
\end{align}
Notice that $\eta_1 \in W_1^\perp \iff \eta_1 \in \ker(B)$, where $A = [Y_1 \ B^*]$. Writing $\cS^{n-1} \subset \bC^n$ as the unit sphere, we can rewrite (\ref{overview-dist-small-equiv}) as
\[
    \dist(Y_1, W_1) \ \text{small} \iff \ \left|\langle Y_1, \eta_1\rangle\right| \ \text{small for any} \ \eta_1 \in \ker(B) \cap \cS^{n-1}.
\]
We show that for generic vectors $\eta \in \cS^{n-1}$ we have good control on the lower tail of $\left|\langle Y_1, \eta\rangle\right|$. In the rare case that $\eta$ is structured and does not provide a good lower tail bound on $\left|\langle Y_1, \eta\rangle\right|$, we show that it is unlikely $\eta \in \ker(B)$. Combining these arguments will then provide us a proof of Theorem \ref{mainwnorm}.

This general strategy of reducing the least singular problem into a statement of distance between columns and then doing casework was developed by Rudelson in \cite{Ru08} and has since become the standard technique in attacking such problems. \cite{BaRu17} and this paper are no exceptions to the rule. However, the technical difficulties that arise when working with sparse matrices are markedly different than in other settings. We will draw some inspiration from \cite{BaRu17} because of this, but our casework and analysis will differ from \cite{BaRu17} significantly.

\subsection{Generic and Structured Vectors} \label{generic}
Let us now motivate our definitions of generic and structured vectors by considering the distribution of $|\langle Y_1, \eta \rangle|$ for $\eta \in \cS^{n-1}$.

Let $\hat{\eta}, G \in \bC^n$ denote the random vectors
\[
    \hat{\eta}_i = \d_{i1}\eta_i, \quad G_i = \xi_{i1}.
\]
(Notice that $G$ is a Gaussian random vector.) Then we can rewrite
\[
    |\langle Y_1, \eta \rangle| = |\langle G, \hat{\eta} \rangle|.
\]
Observe if we condition on the value of $\|\hat{\eta}\|_2$, we obtain
\[
    \langle G, \hat{\eta} \rangle \sim \cN_\bC(0, \|\hat{\eta}\|_2).
\]
Consequently, if we want a good lower tail bound on $|\langle Y_1, \eta \rangle|$, we want it to be unlikely for $\|\hat{\eta}\|_2$ to be small.

How do we make it unlikely for $\|\hat{\eta}\|_2$ to be small? One way is to require $\eta$ to have many coordinates of a certain size so that it is likely some coordinates of this size will not be zeroed out. This leads us to our definition.

Partition the unit sphere $\cS^{n-1}$ into the sets
\begin{align*}
    \cS_{IC} &= \left\{x \in \cS^{n-1} : \sum_{|x_i|^2 \le \frac{1}{c_1n}} |x_i|^2 \ge \e_1\right\}\\
    \cS_{MC} &= \left\{x \in \cS^{n-1} : \sum_{|x_i|^2 \le \frac{1}{c_1n}} |x_i|^2 < \e_1, \quad \sum_{\frac{1}{c_1n} < |x_i|^2 \le \frac{1}{c_2n^{1-\d}}} |x_i|^2 \ge \e_2\right\}\\
    \cS_{HC} &= \left\{x \in \cS^{n-1} : \sum_{|x_i|^2 \le \frac{1}{c_2n^{1-\d}}} |x_i|^2 < \e_1 + \e_2\right\}
\end{align*}
where $c_1, c_2, \e_1, \e_2$ are constants that will be determined later. We call $\cS_{IC}$ the set of incompressible (or generic) vectors and call $\cS^{n-1} - \cS_{IC}$ the set of compressible (or structured) vectors. In particular, we will call $\cS_{MC}$ the set of moderately compressible vectors, and $\cS_{HC}$ the set of highly compressible vectors. In the following sections, we first show that when $\eta$ is compressible, it is likely that $\eta \not \in \ker(B)$, or equivalently,
\[
    \|B\eta\|_2 > 0.
\]
For presentation purposes (and to avoid proof duplication), we will do this by showing it is likely that
\[
    \|A\eta\|_2
\]
cannot be too small for compressible $\eta$. The argument will work in exactly the same way when $A$ is replaced by $B$. Because of technical issues introduced by sparsity, we will treat the case $\eta \in \cS_{HC}$ (Section \ref{hc}) differently from the case $\eta \in \cS_{MC}$ (Section \ref{mc}).

On the other hand, in Section \ref{ic} we show when $\eta$ is incompressible, it is likely that
\[
    \left|\langle Y_1, \eta\rangle\right|
\]
cannot be too small. We then put together our arguments in Section \ref{mainwnormproof} to prove Theorem \ref{mainwnorm}.

\section{Notation}
For the reader's convenience, we collect some notation that will continue to appear throughout the paper.
\begin{center}
    \begin{tabular}{|l|l|}
        \hline
        \textbf{Symbol} & \textbf{Meaning} \\
        \hline
        $\d_p$                          & As defined in Section \ref{intro}.\\
        $n, A, \d_{ij}, \xi_{ij}, \d$   & As defined in Theorem \ref{main}.\\
        $Y_i$                           & $i$-th column of $A$. \\
        $W_i$                           & Span of the $Y_j$'s for $j \neq i$. \\
        $B$                             & Matrix such that $A = [Y_1 \ B^*]$.\\
        $\cS^{n-1}$                     & Complex unit sphere $\cS^{n-1} \subset \bC^n$.\\
        $\cS_{\{HC, MC, IC\}}$          & As defined in Section \ref{generic}.\\
        $c_1, c_2, \e_1, \e_2$          & As defined in Section \ref{generic}.\\
        $\cN_\bC(\mu,\s^2)$             & Complex Gaussian distribution.\\
        $\cN_\bR(\mu,\s^2)$             & Real Gaussian distribution. \\
        \hline
    \end{tabular}
\end{center}

\section{Preliminaries}
In this section we will investigate the properties of the set
\[
    V = \left\{x \in \cS^{n-1} : \sum_{|x_i|^2 \le \frac{1}{a}} |x_i|^2 < d_1, \quad \sum_{\frac{1}{a} < |x_i|^2 \le \frac{1}{b}} |x_i|^2 \ge d_2 \right\}.
\]
We will assume $0 < d_1 \le 0.1$ and $d_1 \ll d_2$, which hold in all our use cases.

Define
\[
    \Sparse(m) = \{x \in \bC^n : |\supp(x)| \le m\}.
\]
Let $\cN$ be a maximal $\sqrt{d_1}$-net of the $\Sparse(a)$ vectors on $\cS^{n-1}$. Let us approximate any $x \in V$ with an element in $\cN$ in the following manner:
\begin{enumerate}
    \item Set $x^{(0)} = x \in V$.
    \item Define $x^{(1)} \in \bC^n$ such that
        \[
            x^{(1)}_i =
            \begin{cases}
                x^{(0)}_i & \frac{1}{a} < |x^{(0)}_i|^2 \\
                0 & \text{otherwise}
            \end{cases}.
        \]
    \item Set $x^{(2)} = x^{(1)} / \|x^{(1)}\|_2$.
    \item Let $x^{(3)} \in \cN$ be $\sqrt{d_1}$-approximation of $x^{(2)}$.
\end{enumerate}
Notice that
\[
    \|x - x^{(3)}\|_2 \le \sum_{i = 0}^2 \|x^{(i)} - x^{(i+1)}\|_2 \le 3\sqrt{d_1}.
\]
so every point in $V$ is $3\sqrt{d_1}$-close to a point in $\cN$.

However, we are not satisfied with $x^{(3)}$ just being a $\Sparse(a)$ approximation of $x$; we would like $x^{(3)}$ to inherit some properties of $x$ as well. In fact, we claim
\begin{equation}
    \label{x3claim}
    \sum_{\frac{1}{2a} < |x_i^{(3)}|^2 \le \frac{4}{b}} |x_i^{(3)}|^2 \ge d_2 - 57\sqrt{d_1}.
\end{equation}
To see this, first note
\begin{align*}
    \sum_{\frac{1}{a} < |x_i|^2 \le \frac{1}{b}} |x_i|^2 \ge d_2 &\implies \sum_{\frac{1}{a} < |x^{(1)}_i|^2 \le \frac{1}{b}} |x^{(1)}_i|^2 \ge d_2 \\
                                                                           &\implies \sum_{\frac{1}{a} < |x^{(2)}_i|^2 \le \frac{1}{b(1 - \sqrt{d_1})^2}} |x^{(2)}_i|^2 \ge d_2 \\
                                                                           &\implies \sum_{\frac{1}{a} < |x^{(2)}_i|^2 \le \frac{3}{b}} |x^{(2)}_i|^2 \ge d_2.
\end{align*}
Define $LB, LA, NL \subset \{1, \dots, n\}$ to be the sets of indices
\begin{align*}
    i \in LB &\implies \frac{1}{a} < |x^{(2)}_i|^2 \le \frac{3}{b}, \quad |x^{(3)}_i|^2 \le \frac{1}{2a} \\
    i \in LA &\implies \frac{1}{a} < |x^{(2)}_i|^2 \le \frac{3}{b}, \quad |x^{(3)}_i|^2 > \frac{4}{b} \\
    i \in NL &\implies \frac{1}{a} < |x^{(2)}_i|^2 \le \frac{3}{b}, \quad \frac{1}{2a} < |x^{(3)}_i|^2 \le \frac{4}{b}.
\end{align*}
(Think of $LB$ standing for ``loss from below, " $LA$ standing for ``loss from above," and $NL$ standing for ``no loss.") Observe we have
\begin{align*}
    d_1 &\ge \sum_{i \in LB} \left(|x_i^{(2)}| - |x_i^{(3)}|\right)^2 \\
        &\ge \sum_{i \in LB} \left(|x_i^{(2)}| - \frac{|x_i^{(2)}|}{\sqrt{2}}\right)^2 \\
        &\ge \left(1 - \frac{1}{\sqrt{2}}\right)^2 \sum_{i \in LB} |x_i^{(2)}|^2
\end{align*}
and
\begin{align*}
    d_1 &\ge \sum_{i \in LA} \left(|x_i^{(3)}| - |x_i^{(2)}|\right)^2 \\
        &\ge \sum_{i \in LA} \left(\sqrt{\frac{4}{b}} - \sqrt{\frac{3}{b}}\right)^2 \\
        &\ge \left(\sqrt{\frac{4}{3}} - 1\right)^2 \sum_{i \in LA} \frac{3}{b} \\
        &\ge \left(\sqrt{\frac{4}{3}} - 1\right)^2 \sum_{i \in LA} |x^{(2)}_i|^2.
\end{align*}
Thus we obtain
\begin{equation}
    \label{LB-and-LA}
    \sum_{i \in LB \cup LA} |x_i^{(2)}|^2 \le 12d_1 + 42d_1 = 54d_1.
\end{equation}
Furthermore, we have
\begin{align}
    \sum_{i \in NL} |x^{(2)}_i|^2 - \sum_{i \in NL} |x_i^{(3)}|^2 &= \sum_{i \in NL} |x^{(2)}_i|^2 - |x_i^{(3)}|^2 \nonumber \\
                                                                  &\le \sum_{i \in NL} |x^{(2)}_i - x^{(3)}_i|^2 + 2|x^{(3)}_i| \cdot |x^{(2)}_i - x^{(3)}_i| \nonumber \\
                                                                  &\le \|x^{(2)} - x^{(3)}\|^2_2 + 2 \| x^{(3)} \|_2 \|x^{(2)} - x^{(3)}\|_2 \nonumber \\
                                                                  &\le d_1 + 2\sqrt{d_1}. \label{NL}
\end{align}
From (\ref{LB-and-LA}) and (\ref{NL}) we get
\[
    \sum_{\frac{1}{a} < |x^{(2)}_i|^2 \le \frac{3}{b}} |x^{(2)}_i|^2 - \sum_{\frac{1}{2a} < |x_i^{(3)}|^2 \le \frac{4}{b}} |x_i^{(3)}|^2 \le 54d_1 + d_1 + 2\sqrt{d_1} \le 57\sqrt{d_1},
\]
proving (\ref{x3claim}).

Consequently, every point in $V$ is $3\sqrt{d_1}$-close to a point in
\[
    \Sigma_V := \cN \cap \left\{x \in \cS^{n-1} : \sum_{\frac{1}{2a} < |x_i|^2 \le \frac{4}{b}} |x_i|^2 \ge d_2 - 57\sqrt{d_1}\right\}.
\]
It is also well-known that
\[
    |\cN| < \left(\frac{3}{\sqrt{d_1}}\right)^{2a} \binom{n}{a} < \left(\frac{3}{\sqrt{d_1}}\right)^{2a} \left(\frac{ne}{a}\right)^a.
\]
(For a proof, see Lemma 5.2 in \cite{Ver11}). Hence we have shown
\begin{lem}
    \label{net}
    Let $V$ be defined as above with $0 < 57\sqrt{d_1} < d_2 < 1$. Then every point in $V$ is $3\sqrt{d_1}$-close to a point in $\Sigma_V$, where
    \[
        |\Sigma_V| < \left(\frac{3}{\sqrt{d_1}}\right)^{2a} \left(\frac{ne}{a}\right)^a
    \]
    and
    \[
        x \in \Sigma_V \implies 
        \begin{cases}
            x \in \Sparse(a) &\\
            \sum_{\frac{1}{2a} < |x_i|^2 \le \frac{4}{b}} |x_i|^2 \ge d_2 - 57\sqrt{d_1} &
        \end{cases}.
    \]
\end{lem}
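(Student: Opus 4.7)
The plan is to explicitly construct, for every $x \in V$, a nearby vector $x^{(3)} \in \Sigma_V$, and then bound the cardinality of the candidate set $\Sigma_V$ by a volumetric counting argument. To build $x^{(3)}$, I would first truncate $x$ by zeroing out every coordinate with $|x_i|^2 \le 1/a$, producing $x^{(1)}$ supported on at most $a$ indices (by Markov, since $\|x\|_2 = 1$). The defining inequality $\sum_{|x_i|^2 \le 1/a} |x_i|^2 < d_1$ immediately gives $\|x - x^{(1)}\|_2 < \sqrt{d_1}$ and $\|x^{(1)}\|_2 > \sqrt{1 - d_1}$. I would then renormalize, setting $x^{(2)} = x^{(1)}/\|x^{(1)}\|_2$, which costs at most another $\sqrt{d_1}$ in distance (up to constants absorbed by the assumption $d_1 \le 0.1$). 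Finally, I would fix once and for all a $\sqrt{d_1}$-net $\cN$ of $\Sparse(a) \cap \cS^{n-1}$ and pick $x^{(3)} \in \cN$ with $\|x^{(2)} - x^{(3)}\|_2 \le \sqrt{d_1}$. The triangle inequality then yields $\|x - x^{(3)}\|_2 \le 3\sqrt{d_1}$, as required.

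The main difficulty is showing that $x^{(3)}$ inherits a medium-coordinate mass condition, namely $\sum_{1/(2a) < |x_i^{(3)}|^2 \le 4/b} |x_i^{(3)}|^2 \ge d_2 - O(\sqrt{d_1})$. Both the lower threshold (from $1/a$ to $1/(2a)$) and the upper threshold (from $1/b$ to $4/b$) must be widened to absorb the net approximation error. After truncation and renormalization, the indices originally satisfying $1/a < |x_i|^2 \le 1/b$ still carry at least $d_2$ mass in $x^{(2)}$, but they may have been inflated into a slightly larger range such as $(1/a, 3/b]$. I would classify each such index according to whether its $x^{(3)}$ coordinate (a) drops below $1/(2a)$ (\emph{loss from below}), (b) exceeds $4/b$ (\emph{loss from above}), or (c) stays in the good window. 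For case (a), the coordinate must have shrunk by at least a factor of $1/\sqrt{2}$, contributing at least $(1 - 1/\sqrt{2})^2 |x_i^{(2)}|^2$ to $\|x^{(2)} - x^{(3)}\|_2^2 \le d_1$; for case (b), it must have grown by at least $\sqrt{4/b} - \sqrt{3/b}$, again bounding its $x^{(2)}$-mass by a constant multiple of $d_1$; case (c) transfers mass to $x^{(3)}$ up to a cross-term error of order $\sqrt{d_1}$, via the elementary identity $|x_i^{(2)}|^2 - |x_i^{(3)}|^2 \le |x_i^{(2)} - x_i^{(3)}|^2 + 2|x_i^{(3)}| \cdot |x_i^{(2)} - x_i^{(3)}|$. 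Summing gives an overall deficit of $O(\sqrt{d_1})$, which with explicit constants can be shown to be at most $57\sqrt{d_1}$.

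The cardinality bound on $\cN$ (and hence $\Sigma_V$) is the easiest part: any element of $\Sparse(a) \cap \cS^{n-1}$ lies in one of $\binom{n}{a} \le (ne/a)^a$ coordinate subspaces, each isometric to the unit sphere of $\bC^a \cong \bR^{2a}$, which admits a $\sqrt{d_1}$-net of size at most $(3/\sqrt{d_1})^{2a}$ by a standard volumetric argument. Multiplying yields the claimed bound. The only real obstacle is the mass-transfer bookkeeping in the second step; it is what forces the widening factors $2$ and $4$ in the thresholds and the appearance of the constant $57$, and it is the only place where the hypotheses $d_1 \le 0.1$ and $57\sqrt{d_1} < d_2$ are really used.
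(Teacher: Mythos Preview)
Your proposal is correct and follows essentially the same route as the paper: the same three-step construction $x \to x^{(1)} \to x^{(2)} \to x^{(3)}$, the same triangle-inequality distance bound, the same trichotomy into ``loss from below,'' ``loss from above,'' and ``no loss'' indices (which the paper calls $LB$, $LA$, $NL$), the same elementary inequality $|x_i^{(2)}|^2 - |x_i^{(3)}|^2 \le |x_i^{(2)} - x_i^{(3)}|^2 + 2|x_i^{(3)}|\cdot|x_i^{(2)} - x_i^{(3)}|$ for the mass transfer, and the same volumetric net count. Even the widening factors $1/(2a)$, $4/b$ and the intermediate threshold $3/b$ match.
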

Sometimes, instead of studying $\|Ax\|$ for $x \in \Sigma_V$ directly, we would like to first zero out some entries of $x$. Hence we will need:
\begin{lem} \label{zero-out}
    Suppose $x, y \in \bC^n$ such that $|x_i| \ge |y_i|$ for all $1 \le i \le n$.
    Then
    \[
        \bP\left[\|Ax\|_2 \le t\right] \le \bP\left[\|Ay\|_2 \le t\right], \quad t \ge 0.
    \]
\end{lem}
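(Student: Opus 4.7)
The plan is to condition on the sparsity pattern $(\delta_{ij})$ and then exploit stability of $\cN_\bC(0,1)$ under complex-linear combinations. Concretely, for fixed $\delta$, each row of $Ax$ is
\[
    (Ax)_i = \sum_j \delta_{ij} \xi_{ij} x_j,
\]
a linear combination of i.i.d.\ $\cN_\bC(0,1)$ variables with complex coefficients $\delta_{ij} x_j$. Different rows use disjoint families of $\xi_{ij}$, so they are conditionally independent, and the stability property yields
\[
    (Ax)_i \mid \delta \,\sim\, \cN_\bC\!\left(0,\, \sigma_i^2(x)\right), \qquad \sigma_i^2(x) := \sum_j \delta_{ij}|x_j|^2.
\]
Crucially, this depends on $x$ only through the moduli $(|x_j|)_j$, so the phases play no role.

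Next, using the standard fact that $|\cN_\bC(0,\sigma^2)|^2 \stackrel{d}{=} \sigma^2 E$ with $E \sim \mathrm{Exp}(1)$, I would rewrite
\[
    \|Ax\|_2^2 \,\stackrel{d}{=}\, \sum_{i=1}^n \sigma_i^2(x)\, E_i,
\]
where $E_1, \dots, E_n$ are i.i.d.\ standard exponentials (independent of $\delta$), and the analogous identity holds for $y$ with $\sigma_i^2(y)$ in place of $\sigma_i^2(x)$. Coupling both quantities through a single draw of $(\delta, E_1, \dots, E_n)$, the hypothesis $|x_j| \ge |y_j|$ forces $\sigma_i^2(x) \ge \sigma_i^2(y)$ for every $i$; hence under this coupling $\|Ax\|_2 \ge \|Ay\|_2$ almost surely, and the desired tail inequality is immediate.

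The proof is essentially bookkeeping and I do not anticipate a genuine obstacle. The one step deserving care is the conditional Gaussianity claim: it rests on the fact that $\xi_{ij}$ is a \emph{complex} Gaussian, so $\xi_{ij} x_j \stackrel{d}{=} \xi_{ij}|x_j|$ jointly over $j$. This is what licenses reducing the whole argument to the squared moduli $|x_j|^2$, which in turn is exactly what the hypothesis $|x_j| \ge |y_j|$ speaks to.
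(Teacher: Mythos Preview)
Your proof is correct and rests on the same key observation as the paper's: conditionally on the Bernoulli pattern $(\delta_{ij})$, each entry $(Ax)_i$ is complex Gaussian with variance $\sigma_i^2(x)=\sum_j \delta_{ij}|x_j|^2$, which depends on $x$ only through $(|x_j|)_j$. The execution differs slightly: the paper argues row by row, using the CDF inequality $\bP[|R_i\cdot x|\le t'\mid \delta]\le \bP[|R_i\cdot y|\le t'\mid \delta]$ and then an iterative replacement to pass from $x$ to $y$ one coordinate of the sum at a time, whereas you build a single global coupling via $|(Ax)_i|^2 \stackrel{d}{=} \sigma_i^2(x)E_i$ with i.i.d.\ exponentials and obtain $\|Ax\|_2\ge\|Ay\|_2$ almost surely on the coupled space. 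Your route is a little more streamlined (one inequality instead of $n$ replacements); the paper's route has the minor advantage of never needing to name the exponential law of $|{\cN_\bC}|^2$ explicitly. Substantively the two arguments are equivalent.
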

\begin{proof}
    The key is to exploit the fact that the nonzero entries of $A$ are drawn from the Gaussian distribution. Note we can rewrite
    \begin{align*}
        \bP\left[\|Ax\|_2 \le t\right] &= \bP\left[\|Ax\|^2_2 \le t^2\right] \\
                                       &= \bP\left[\sum_{i=1}^n |R_i \cdot x|^2 \le t^2\right],
    \end{align*}
    where $\sum |R_i \cdot x|^2$ is a sum of independent random variables.

    Let us first consider $|R_1 \cdot x|$. If we condition on the values of $\d_{11}, \dots, \d_{1n}$, then
    \[
        |R_1 \cdot x| \sim \cN_{\bC}\left(0, \sum_{i = 1}^n \d_{1i}|x_i|^2\right), \quad |R_1 \cdot y| \sim \cN_{\bC}\left(0, \sum_{i = 1}^n \d_{1i}|y_i|^2\right).
    \]
    As
    \[
        \sum_{i = 1}^n \d_{1i}|x_i|^2 \ge \sum_{i = 1}^n \d_{1i}|y_i|^2,
    \]
    we conclude
    \[
        \bP\left[|R_1 \cdot x| \le t' \ \Big| \ \d_{1i}, \dots, \d_{1n} \right] \le \bP\left[|R_1 \cdot y| \le t' \ \Big| \ \d_{1i}, \dots, \d_{1n} \right], \quad t' \ge 0,
    \]
    which implies
    \[
        \bP\left[|R_1 \cdot x| \le t'\right] \le \bP\left[|R_1 \cdot y| \le t' \right], \quad t' \ge 0.
    \]
    Hence
    \[
        \bP\left[\sum_{i=1}^n |R_i \cdot x|^2 \le t^2\right] \le \bP\left[|R_1 \cdot y|^2 + \sum_{i=2}^n |R_i \cdot x|^2 \le t^2\right].
    \]
    The lemma follows by repeating this argument for $i = 2, \dots, n$.
\end{proof}
We are now ready to tackle Theorem \ref{mainwnorm}.

\section{The Highly Compressible Case} \label{hc}
In this section, we would like to show
\begin{thm}
    \label{hsparse}
    There exists a constant $c > 0$ such that for sufficiently large $n$ we have
    \[
        \bP\left[\inf_{x \in \cS_{HC}} \|Ax\|_2 \le \e \text{ and } \|A\|_{op} \le Kn^{\frac{\d}{2}}\right] \le e^{-cn^\d}, \quad 0 \le \e \le 1.
    \]
\end{thm}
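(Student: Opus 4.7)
\emph{Strategy.} My approach is a net argument, reducing the infimum over $\cS_{HC}$ to a supremum of per-point probabilities via discretization, then closing the union bound by stratifying along the natural sparsity parameter inherent to $\cS_{HC}$.

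\emph{Net construction.} I adapt Lemma \ref{net} with $a = c_2 n^{1-\d}$ and $d_1 = \e_1 + \e_2$, treating the middle-coordinate condition as vacuous for $\cS_{HC}$. This produces a net $\Sigma_{HC} \subset \Sparse(c_2 n^{1-\d}) \cap \cS^{n-1}$ of total cardinality $\exp(O(n^{1-\d}\log n))$ such that every $x \in \cS_{HC}$ lies within $3\sqrt{\e_1+\e_2}$ of some $\hat x \in \Sigma_{HC}$, and $|\hat x_i|^2 > 1/(2c_2 n^{1-\d})$ on $\supp(\hat x)$. Partitioning by sparsity $k = |\supp(\hat x)|$ yields strata of cardinality at most $\binom{n}{k}(C/\sqrt{\e_1+\e_2})^{2k} = \exp(O(k\log(n/k)))$.

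\emph{Per-point bound.} The key identity is $\|A\hat x\|_2^2 = \sum_{i=1}^n V_i Y_i$, where $V_i = \sum_j \d_{ij}|\hat x_j|^2$ and $Y_i \sim \mathrm{Exp}(1)$ are iid and independent of the (iid) $V_i$'s; this uses $R_i \cdot \hat x = \sqrt{V_i}\,Z_i$ for a standard complex Gaussian $Z_i$. Since $\bE[\|A\hat x\|_2^2] = np = n^\d$, for $c_0 < 1$ I aim to bound
\[
\bP\bigl[\|A\hat x\|_2^2 \le c_0^2 n^\d\bigr] \le e^{sc_0^2 n^\d}\bE[1/(1+sV_1)]^n.
\]
Optimizing $s$ as a function of $k$ (roughly $s \sim k(1/c_0 - 1)$) and exploiting the compound-Bernoulli structure of $V_1$ yields the scaling $\exp(-c(1-c_0)^2 k n^\d)$ for $\hat x$ with balanced coordinates. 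For $\hat x$ with imbalanced coordinates (a single coordinate dominating), Lemma \ref{zero-out} reduces the bound to the single-column norm $\|Ae_j\|_2$, producing at least the baseline $\exp(-cn^\d)$ bound; careful interpolation between these regimes delivers a per-point estimate of the form $\exp(-ckn^\d)$ for $k$-sparse $\hat x$.

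\emph{Union bound and transfer.} Combining the $k$-stratum cardinality with the per-point estimate, each stratum contributes $\exp(k\log(Cn/k) - ckn^\d) = \exp(-\Omega(kn^\d))$ for $n$ large; summing over $k = 1, \ldots, c_2 n^{1-\d}$ yields a total probability bound of $\exp(-cn^\d)$. Finally, for any $x \in \cS_{HC}$ with nearest $\hat x \in \Sigma_{HC}$,
\[
\|Ax\|_2 \ge \|A\hat x\|_2 - \|A\|_{op}\|x - \hat x\|_2 \ge c_0 n^{\d/2} - 3K\sqrt{\e_1+\e_2}\,n^{\d/2},
\]
so choosing $\e_1 + \e_2$ small enough that $3K\sqrt{\e_1+\e_2} < c_0/2$ forces $\|Ax\|_2 > (c_0/2)n^{\d/2} \ge 1 \ge \e$ for $n$ large and $\e \le 1$. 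The main obstacle is the per-point Chernoff calculation: a direct Bernstein-type sub-exponential tail bound only yields $\exp(-cn^\d)$, which is insufficient for $\d < 1/2$. To overcome this one must work carefully with the Laplace transform of $V_1$, tuning $s$ to the sparsity $k$ and invoking Lemma \ref{zero-out} to handle vectors with heterogeneous coordinate magnitudes, so as to extract the $k$-scaling in the per-point bound that is necessary to beat the stratum cardinality.
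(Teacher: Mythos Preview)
Your stratification and per-point mechanism both differ from the paper's, and the difference is where the argument breaks. The paper does \emph{not} stratify by the support size $k$; it slices $\cS_{HC}$ into $O(1/\d)$ shells $\cS_{HC}^1,\dots,\cS_{HC}^m$ according to coordinate \emph{magnitude}, with thresholds $a_k=n^{k\d/2}$ and $b_k=a_{k-1}$, and on each shell invokes the combinatorial Row Bound (Lemma~\ref{rowbound}) with a step size $s\asymp b_k$. That lemma counts rows $i$ in which exactly one of the $\d_{ij}$, $j\in\supp(y)$, equals $1$; on those rows there is no cancellation and $|R_i\cdot y|\ge |y_{j^*}|$. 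This produces a per-point failure probability $\exp(-c\,b_k n^\d)$ against a net of size $\exp(O(a_k\log n))$, and since $b_k n^\d=a_k n^{\d/2}\gg a_k\log n$ the union bound closes shell by shell (Lemma~\ref{hcstep}). The paper explicitly notes that standard Chernoff or Paley--Zygmund bounds are inadequate in this regime.

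Your claimed per-point bound $\exp(-ckn^\d)$ is actually false in the range that matters. Take $\hat x$ with $|\hat x_1|^2=1-(k-1)/(2c_2n^{1-\d})$ and $|\hat x_j|^2$ just above $1/(2c_2n^{1-\d})$ for $2\le j\le k$; this satisfies your coordinate lower bound and arises from a genuine $x\in\cS_{HC}$ whenever $k\le 2(\e_1+\e_2)c_2n^{1-\d}$. On the event that column $1$ of $A$ vanishes (probability $(1-p)^n\sim e^{-n^\d}$) together with $\|A\|_{op}\le Kn^{\d/2}$, one has $\|A\hat x\|_2^2\le K^2n^\d\,(1-|\hat x_1|^2)\le K^2(\e_1+\e_2)n^\d<c_0^2n^\d$, the last inequality being forced by your own transfer constraint $3K\sqrt{\e_1+\e_2}<c_0/2$. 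Hence $\bP[\|A\hat x\|_2^2\le c_0^2 n^\d]\ge e^{-(1+o(1))n^\d}$ regardless of $k$, whereas the $k$-stratum carries entropy $k\log(Cn/k)$; for $\d<1/2$ and $k\asymp(\e_1+\e_2)n^{1-\d}$ this entropy is $\Theta(n^{1-\d}\log n)\gg n^\d$ and the union bound blows up. Lemma~\ref{zero-out} cannot rescue this: flattening all coordinates to $1/\sqrt{2c_2n^{1-\d}}$ drops $\bE\|Ay\|_2^2$ to $kn^{2\d-1}/(2c_2)\ll n^\d$, while retaining only the dominant coordinate reproduces the $e^{-cn^\d}$ bound you already have. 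A magnitude-based multiscale decomposition, as in Lemma~\ref{hcstep}, is exactly what is needed to make the per-point decay match the net entropy at each scale.
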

Let $R_i$ denote the $i$-th row of $A$. When $x$ is highly compressible, most of the mass of $x$ is concentrated on $O(n^{1 - \d})$ coordinates. Consequently, as each entry of $A$ is equal to 0 with probability $n^{\d - 1}$, it is difficult to obtain a good lower tail bound on
\[
    \|Ax\|_2^2 = \sum_i |R_i \cdot x|^2
\]
using standard arguments like Paley-Zygmund or Chernoff bounds. Instead, we employ an alternative strategy by exploiting the fact that there is very little cancellation in the sum
\[
    R_i \cdot x = \sum_j A_{ij}x_j.
\]
To do this, we introduce the following lemma.
\begin{lem}[Row Bound]
    \label{rowbound}
    For $y \in \bC^n$ and $J \subset \{1, \dots, n\}$, define $I_y(J)$ to be the set
    \[
        \left\{i \in [1,n] : |A_{i,j^*}| \ge 1 \text{ for some $j^* \in J$, } A_{i, j} = 0 \text{ for $j \in \supp(y)\backslash j^*$}\right\}.
    \]
    Then
    \[
        \bP\left[|I_y(J)| \le \frac{3}{20}|J|n^\d\left(1 - n^{\d-1}\right)^{m-1}\right] \le \exp\left(-\frac{3}{80}|J|n^\d\left(1 - n^{\d-1}\right)^{m-1}\right),
    \]
    where $m = |\supp(y)|$.
\end{lem}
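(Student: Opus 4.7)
My plan is to express $|I_y(J)|$ as a sum of $n$ independent, identically distributed Bernoulli indicators---one per row of $A$---then compute the common mean and finish with a multiplicative Chernoff lower-tail bound. Writing $|I_y(J)| = \sum_{i=1}^n \mathbf{1}_{\{i \in I_y(J)\}}$ gives the decomposition immediately, since distinct rows of $A$ are independent.

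To compute the common mean $p^* = \bP[i \in I_y(J)]$, I would decompose the event $\{i \in I_y(J)\}$ as the \emph{disjoint} union $\bigsqcup_{j^* \in J} E_i^{j^*}$, where $E_i^{j^*} = \{|A_{i,j^*}| \ge 1\} \cap \{A_{i,j} = 0 \text{ for } j \in \supp(y) \setminus j^*\}$. Disjointness holds in the intended regime $J \subset \supp(y)$: for distinct $j^*, j^{**} \in J$, the event $E_i^{j^*}$ forces $A_{i,j^{**}} = 0$, which contradicts the $|A_{i,j^{**}}| \ge 1$ required by $E_i^{j^{**}}$. By independence of the entries in row $i$, together with the fact that $\bP[|\xi| \ge 1] = e^{-1}$ for $\xi \sim \cN_\bC(0,1)$ (since $|\xi|^2 \sim \mathrm{Exp}(1)$), each factor gives $\bP[E_i^{j^*}] = e^{-1} \cdot n^{\d-1} \cdot (1 - n^{\d-1})^{m-1}$, and therefore
\[
    \mu := \bE[|I_y(J)|] = \frac{|J|n^\d}{e}\,(1 - n^{\d-1})^{m-1}.
\]

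The final step is the standard multiplicative Chernoff bound $\bP[S \le (1 - \theta)\mu] \le \exp(-\theta^2\mu/2)$ applied with $1 - \theta = 3e/20$, which is tuned so that $(1-\theta)\mu$ equals precisely the $\frac{3}{20}|J|n^\d(1 - n^{\d-1})^{m-1}$ threshold appearing in the lemma. The resulting Chernoff exponent is
\[
    \frac{(1 - 3e/20)^2}{2e}\,|J|n^\d(1 - n^{\d-1})^{m-1},
\]
and a direct numerical check gives $(1 - 3e/20)^2/(2e) \approx 0.0645 > 0.0375 = 3/80$, yielding the claimed bound with room to spare.

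Since this is a direct application of Chernoff to an i.i.d.\ Bernoulli sum, there is no real structural obstacle. The only things to watch are (i) the disjointness of the $E_i^{j^*}$, which needs the implicit hypothesis that $J$ is contained in (or at least interacts benignly with) $\supp(y)$, and (ii) the numerical check $(1 - 3e/20)^2/(2e) > 3/80$ that closes the Chernoff estimate---both routine.
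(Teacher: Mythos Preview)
Your proposal is correct and follows essentially the same route as the paper: write $|I_y(J)|$ as a sum of i.i.d.\ row indicators, compute $\bP[i\in I_y(J)]=c_g\,|J|\,n^{\d-1}(1-n^{\d-1})^{m-1}$ via the disjoint decomposition over $j^*\in J$, and finish with a multiplicative Chernoff bound. The only cosmetic difference is that the paper bounds $c_g=\bP[|\xi|\ge 1]>3/10$ and then takes $\theta=1/2$ (which produces the constant $3/80$ on the nose), whereas you compute $c_g=e^{-1}$ exactly and verify the resulting Chernoff exponent numerically; your flagging of the implicit hypothesis $J\subset\supp(y)$ is also a point the paper leaves unstated.
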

\begin{proof}
    Denote
    \[
        c_g = \bP[|X| \ge 1], \quad X \sim \cN_\bC(0,1)
    \]
    and observe
    \[
        \bP[i \in I_y(J)] = c_g|J|n^{\d-1}\left(1 - n^{\d-1}\right)^{m-1} > \frac{3}{10}|J|n^{\d-1}\left(1 - n^{\d-1}\right)^{m-1}.
    \]
    The result then follows by applying the multiplicative Chernoff bound.
\end{proof}
\begin{remark}
    Notice that Lemma \ref{rowbound} is strong for small $m$, i.e. when $y$ is sparse.
\end{remark}
As an illustration of our argument, let us suppose $x \in \cS^{n-1}$ is a $O(n^{1-\d})$-sparse vector. Without loss of generality suppose the entries of $x$ are in non-increasing order of magnitude. Let us denote
\[
    x_{[a:b]} = [0, \dots, 0, x_a, x_{a+1}, \dots, x_b, 0, \dots, 0]^T.
\]
Then for any step size $s$, we note
\[
    x = \sum_{i=0}^{\frac{n}{s} - 1} x_{[is + 1: (i + 1)s]}.
\]
Consequently, we have
\[
    \|Ax\|^2_2 \ge \sum_{i=0}^{\frac{n}{s} - 1} |I_x\left([is + 1, (i + 1)s]\right)||x_{(i+1)s}|^2.
\]
The sparsity of $x$ implies that
\[
    \left(1 - n^{\d-1}\right)^{|\supp(x)| - 1}
\]
is bounded below by a constant, so from Lemma \ref{rowbound} followed by a union bound we conclude that with probability
\[
    \left(\frac{n}{s}\right)e^{-O(n^\d s)}
\]
we have
\[
    \|Ax\|_2^2 \le Cn^\d\sum_{i = 0}^{\frac{n}{s} - 1} s|x_{(i+1)s}|^2
\]
for some constant $C > 0$. Pictorally, the sum $\sum_{i = 0}^{\frac{n}{s} - 1} s|x_{(i+1)s}|^2$ can be drawn as follows:
\begin{figure}[H]
    \center
    \def\svgwidth{0.5\columnwidth}
    \import{./figures/}{drawing.pdf_tex}
\end{figure}
Notice this sum is maximized when the step size $s = 1$; however, the resulting probability
\[
    ne^{-O(n^\d)}
\]
is not good enough to overcome the entropy of the net of $O(n^{1-\d})$-sparse vectors. Consequently, we will disjointize $\cS_{HC}$ into many subsets, using smaller values of $s$ when we can afford to and using larger values when we cannot. In the following subsections, we will make this argument rigourous.

\begin{remark}
    In our illustration above, we have implicitly assumed that $n$ is divisible by the step size $s$, and so $\frac{n}{s}$ is an integer. This need not be the case; if $n$ is not divisible by $s$, we will have to consider $\lfloor \frac{n}{s} \rfloor$ instead (along will all the necessary modifications). The argument, however, will be exactly the same. Consequently, for notational convenience, we will carry out all out computations as if $n$ is divisible by $s$ --- but this need not be the case.
\end{remark}

\subsection{}
In this subsection, we will bound
\[
    \bP\left[\inf_{x \in V} \|Ax\|_2 \le \e \text{ and } \|A\|_{op} \le Kn^{\frac{\d}{2}}\right], \quad 0 \le \e \le 1.
\]
where
\[
    V = \left\{x \in \cS^{n-1} : \sum_{|x_i|^2 \le \frac{1}{a}} |x_i|^2 < d_1, \quad \sum_{\frac{1}{a} < |x_i|^2 \le \frac{1}{b}} |x_i|^2 \ge d_2 \right\}.
\]
with $a \le c_2n^{1 -\d}$ and $d_2/\sqrt{d_1}$ sufficiently large.

From Lemma \ref{net}, we have
\begin{align*}
    &\bP\left[\inf_{x \in V} \|Ax\|_2 \le \e \text{ and } \|A\|_{op} \le Kn^{\frac{\d}{2}}\right] \\
    &\le \bP\left[\inf_{x \in \Sigma_V}\|Ax\|_2 \le \e + 3K\sqrt{d_1}n^{\frac{\d}{2}} \text{ and } \|A\|_{op} \le Kn^{\frac{\d}{2}}\right] \\
    &\le \sum_{x \in \Sigma_V} \bP\left[\|Ax\|_2 \le \e + 3K\sqrt{d_1}n^{\frac{\d}{2}} \text{ and } \|A\|_{op} \le Kn^{\frac{\d}{2}}\right] \\
    &\le \sum_{x \in \Sigma_V} \bP\left[\|Ax\|_2 \le \e + 3K\sqrt{d_1}n^{\frac{\d}{2}} \right],
\end{align*}
where
\[
    x \in \Sigma_V \implies \sum_{\frac{1}{2a} < |x_i|^2 \le \frac{4}{b}} |x_i|^2 \ge d_2 - 57\sqrt{d_1}.
\]
As $\e$ is bounded, for sufficiently large $n$ we have
\[
    \bP\left[\inf_{x \in V} \|Ax\|_2 \le \e \text{ and } \|A\|_{op} \le Kn^{\frac{\d}{2}}\right] \le \sum_{x \in \Sigma_V} \bP\left[\|Ax\|_2 \le 4K\sqrt{d_1}n^{\frac{\d}{2}} \right].
\]
Fix an arbitrary $x \in \Sigma_V$ and let $y$ denote the vector $x$ but with the entries $|x_i|^2 > \frac{4}{b}$ set to 0. By Lemma \ref{zero-out}, we have
\[
    \bP\left[\|Ax\|_2 \le 4K\sqrt{d_1}n^{\frac{\d}{2}} \right] \le \bP\left[\|Ay\|_2 \le 4K\sqrt{d_1}n^{\frac{\d}{2}} \right]
\]
so it suffices to bound the right hand side.

Without loss of generality assume the entries of $y$ are in non-increasing order of magnitude. For sufficiently large $n$, observe Lemma \ref{rowbound} implies
\begin{align*}
    \bP\left[|I_y(J)| \le \frac{1}{10e^{c_2}}|J|n^\d\right] \le \exp\left(-\frac{1}{40e^{c_2}}|J|n^\d\right).
\end{align*}
Then with probability
\[
    \ge 1 - \frac{n}{s} \exp\left(-\frac{1}{40e^{c_2}}sn^\d\right)
\]
we have
\begin{align*}
    \|Ay\|_2^2 &\ge \frac{1}{10e^{c_2}}sn^\d\sum_{i=0}^{\frac{n}{s} - 1} |y_{(i+1)s}|^2 \\
               &\ge \frac{1}{10e^{c_2}}n^\d\|y_{[s:n]}\|_2^2 \\
               &\ge \frac{1}{10e^{c_2}}n^\d\left(d_2 - 57\sqrt{d_1} - \frac{4(s - 1)}{b}\right).
\end{align*}
Setting
\[
    s = 1 + \lfloor \frac{(d_2 - 57\sqrt{d_1})b}{8}\rfloor
\]
we obtain
\[
    \bP\left[\|Ay\|_2^2 \le \frac{1}{20e^{c_2}}n^\d(d_2 - 57\sqrt{d_1})\right] \le n \exp\left(-\frac{1}{40e^{c_2}}\left( 1 + \lfloor\frac{(d_2 - 57\sqrt{d_1})b}{8}\rfloor\right) n^\d\right)
\]
for sufficiently large $n$. Thus we get
\begin{lem}
    \label{hcstep}
    Let $V$ be defined as before with $a \le c_2n^{1 -\d}$ and 
    \[
        0 < 57\sqrt{d_1} + 320e^{c_2}K^2d_1 < d_2 < 1.
    \]
    Then for $0 \le \e \le 1$ and sufficiently large $n$ we have
    \begin{align*}
        &\bP\left[\inf_{x \in V} \|Ax\|_2 \le \e \text{ and } \|A\|_{op} \le Kn^{\frac{\d}{2}}\right] \\
        &\le n\exp\left(-\frac{1}{40e^{c_2}}\left(1 + \lfloor \frac{(d_2 - 57\sqrt{d_1})b}{8}\rfloor\right) n^\d\right) \left(\frac{3}{\sqrt{d_1}}\right)^{2a} \left(\frac{ne}{a}\right)^a.
    \end{align*}
\end{lem}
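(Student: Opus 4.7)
The plan is to reduce the infimum over the uncountable set $V$ to a union bound over the finite net $\Sigma_V$ from Lemma \ref{net}. On the event $\|A\|_{op} \le Kn^{\d/2}$, any $x \in V$ with $\|Ax\|_2 \le \e$ lies within $3\sqrt{d_1}$ of some $\tilde x \in \Sigma_V$ satisfying $\|A\tilde x\|_2 \le \e + 3K\sqrt{d_1}n^{\d/2} \le 4K\sqrt{d_1}n^{\d/2}$ for $\e \le 1$ and $n$ large. A union bound therefore reduces the task to bounding $\bP[\|Ax\|_2 \le 4K\sqrt{d_1}n^{\d/2}]$ for a single $x \in \Sigma_V$ and multiplying by $|\Sigma_V|$.

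For a fixed $x \in \Sigma_V$, I would apply Lemma \ref{zero-out} to replace $x$ with its truncation $y$ obtained by zeroing out every coordinate with $|x_i|^2 > 4/b$. Then $y$ has at most $a \le c_2n^{1-\d}$ nonzero entries, each satisfying $|y_i|^2 \le 4/b$, and $\|y\|_2^2 \ge d_2 - 57\sqrt{d_1}$. Assuming without loss of generality that the entries of $y$ are sorted in non-increasing order of magnitude, I would partition $\{1,\dots,n\}$ into consecutive windows $J_i = [is+1,(i+1)s]$ and apply Lemma \ref{rowbound} to each $J_i$. The rows counted by $I_y(J_i)$ each contribute at least $|y_{(i+1)s}|^2$ to $\|Ay\|_2^2$, and since $a \le c_2n^{1-\d}$ forces $(1-n^{\d-1})^{a-1} \ge e^{-c_2}$ for large $n$, a union bound over the $n/s$ windows yields, except with probability $(n/s)\exp(-\Theta(sn^\d))$,
\[
    \|Ay\|_2^2 \ge \frac{sn^\d}{10e^{c_2}} \sum_{i=0}^{n/s - 1} |y_{(i+1)s}|^2 \ge \frac{n^\d}{10e^{c_2}}\bigl(\|y\|_2^2 - \|y_{[1:s-1]}\|_2^2\bigr).
\]

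The entrywise cap $|y_i|^2 \le 4/b$ bounds the loss by $\|y_{[1:s-1]}\|_2^2 \le 4(s-1)/b$, so picking $s = 1 + \lfloor (d_2 - 57\sqrt{d_1})b/8 \rfloor$ keeps the loss below $(d_2-57\sqrt{d_1})/2$ and yields $\|Ay\|_2^2 \ge n^\d(d_2 - 57\sqrt{d_1})/(20e^{c_2})$. This exceeds $(4K\sqrt{d_1}n^{\d/2})^2 = 16K^2d_1n^\d$ precisely when $d_2 - 57\sqrt{d_1} \ge 320e^{c_2}K^2d_1$, which is exactly the stated hypothesis. Combining the per-vector failure probability with $|\Sigma_V| \le (3/\sqrt{d_1})^{2a}(ne/a)^a$ from Lemma \ref{net} gives the claimed estimate.

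The main obstacle is calibrating the step size $s$: it must be small enough that the loss $4(s-1)/b$ does not deplete $d_2 - 57\sqrt{d_1}$, yet large enough that the Chernoff-type factor $\exp(-\Theta(sn^\d))$ is strong enough when this lemma is eventually composed with the entropy $(ne/a)^a \approx e^{O(n^{1-\d}\log n)}$ in the main invertibility argument. The entrywise cap $|y_i|^2 \le 4/b$ supplied by Lemma \ref{zero-out} is exactly what makes both constraints simultaneously tractable; without it, a single heavy coordinate could carry essentially all of $\|y\|_2^2$ and no choice of $s$ would control the loss.
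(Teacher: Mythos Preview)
Your proposal is correct and follows essentially the same approach as the paper: the same net reduction via Lemma~\ref{net}, the same truncation via Lemma~\ref{zero-out}, the same windowed application of Lemma~\ref{rowbound} with the disjointness of the $I_y(J_i)$ giving $\|Ay\|_2^2 \ge \frac{sn^\d}{10e^{c_2}}\sum_i |y_{(i+1)s}|^2$, and the same choice $s = 1 + \lfloor (d_2 - 57\sqrt{d_1})b/8\rfloor$. The only cosmetic difference is that you phrase the lower bound as $\|y\|_2^2 - \|y_{[1:s-1]}\|_2^2$ whereas the paper writes $\|y_{[s:n]}\|_2^2$, which is the same quantity.
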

Lemma \ref{hcstep} will allow us to prove Theorem \ref{hsparse}.

\subsection{Proof of Theorem \ref{hsparse}}
\begin{proof}
    Let $m$ be the largest integer such that
    \[
        (m - 1)\frac{\d}{2} < 1 - \d.
    \]
    Write $S_{HC}$ as the finite union
    \[
        \cS_{HC} = \bigcup_{k=1}^{m} \cS_{HC}^k,
    \]
    where
    \[
        \cS_{HC}^k = \left\{x \in \cS^{n-1} : \sum_{|x_i|^2 \le \frac{1}{a_k}} |x_i|^2 < d_{1k}, \quad \sum_{\frac{1}{a_k} < |x_i|^2 \le \frac{1}{b_k}} |x_i|^2 \ge d_{2k} \right\}
    \]
    and $a_k, b_k, d_{1k}, d_{2k} > 0$ are finite sequences such that
    \begin{align*}
        &a_k = n^{\frac{k\d}{2}} \text{ for } k < m, \quad a_m = c_2n^{1-\d}, \quad b_k = a_{k - 1}, \quad b_1 = 1, \\
        &d_{21} + d_{11} = 1, \quad d_{1m} = \e_1 + \e_2, \quad d_{1k} = d_{1(k+1)} + d_{2(k+1)}, \\
        &d_{2k} > 57\sqrt{d_{1k}} + 320e^{c_2}K^2d_{1k}.
    \end{align*}
    (Note the $d_{1k}, d_{2k}$'s will only exist if $\e_1 + \e_2$ is very small, which we can enforce with our choice of $\e_1$ and $\e_2$ in Section \ref{mc}.) Then by Lemma \ref{hcstep}, we get
\begin{align*}
    &\bP\left[\inf_{x \in \cS_{HC}} \|Ax\|_2 \le \e \text{ and } \|A\|_{op} \le Kn^{\frac{\d}{2}}\right] \\
    &\le \sum_{k} \bP\left[\inf_{x \in \cS_{HC}^k} \|Ax\|_2 \le \e \text{ and } \|A\|_{op} \le Kn^{\frac{\d}{2}}\right] \\
    &\le e^{-cn^\d}
\end{align*}
for some $c > 0$ and sufficiently large $n$, as desired.
\end{proof}

\section{The Moderately Compressible Case} \label{mc}
In this section, we would like to show
\begin{thm}
    \label{msparse}
    There exists a constant $c > 0$ such that for sufficiently large $n$ we have
    \[
        \bP\left[\inf_{x \in \cS_{MC}} \|Ax\|_2 \le \e \text{ and } \|A\|_{op} \le Kn^{\frac{\d}{2}}\right] \le e^{-cn}, \quad 0 \le \e \le 1.
    \]
\end{thm}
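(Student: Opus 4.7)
The plan is to follow the template of Section 5.1, combining a net argument with a concentration estimate on $\|Ax\|_2$ for fixed $x$. The key difference from the highly compressible case is that Lemma \ref{rowbound} is too weak for $y$ whose essential support grows linearly in $n$, so I would replace it with a tensorization argument: since the rows $R_i$ are i.i.d., $\|Ay\|_2^2 = \sum_i |R_i \cdot y|^2$ is a sum of $n$ independent nonnegative random variables, and a per-row small-ball bound that is \emph{strictly less than one} amplifies to an exponential-in-$n$ decay.

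Concretely, I would apply Lemma \ref{net} with $a = c_1 n$, $b = c_2 n^{1-\d}$, $d_1 = \e_1$, $d_2 = \e_2$ to obtain $\Sigma_V$ of cardinality $(3/\sqrt{\e_1})^{2c_1 n}(e/c_1)^{c_1 n}$. The opening reduction of Section 5.1 applies verbatim, so it suffices to bound $\bP[\|Ax\|_2 \le 4K\sqrt{\e_1}n^{\d/2}]$ for each $x \in \Sigma_V$; and Lemma \ref{zero-out} lets me replace $x$ by its truncation $y$ obtained by zeroing coordinates with $|x_i|^2 > 4/(c_2 n^{1-\d})$, so $\|y\|_\infty^2 \le 4/(c_2 n^{1-\d})$ and $\|y\|_2^2 \ge \e_2 - 57\sqrt{\e_1}$. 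Fix such $y$. Conditional on the Bernoulli mask $\d_{i\cdot}$, $R_i \cdot y \sim \cN_\bC(0, V_i)$ with $V_i = \sum_j \d_{ij}|y_j|^2$, so $\bE[e^{-\l |R_i \cdot y|^2} \mid \d_{i\cdot}] = 1/(1+\l V_i)$. With $V := \bE V_i = n^{\d-1}\|y\|_2^2$ and $\Var(V_i) \le n^{\d-1}\|y\|_\infty^2 \|y\|_2^2$, Paley-Zygmund delivers $\bP[V_i \ge V/2] \ge p_0$ for a constant $p_0 = p_0(c_2, \e_1, \e_2) > 0$. Choosing $\l = 2/V$ forces $\bE[1/(1+\l V_i)] \le 1 - p_0/2 =: q < 1$, and Markov's inequality on $\sum_i |R_i \cdot y|^2$ yields
\[
    \bP[\|Ay\|_2^2 \le t] \le e^{\l t} q^n \le \exp\!\left(\frac{32 K^2 \e_1 n}{\e_2 - 57\sqrt{\e_1}} - n\log(1/q)\right)
\]
for $t = 16 K^2 \e_1 n^\d$.

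To close the proof I would choose the constants in the right order: fix $c_2, \e_2$, then take $\e_1$ small enough (relative to $\e_2, K, c_2$) that the linear-in-$\e_1$ term above is at most $\tfrac{1}{2}\log(1/q)$, so the per-point probability is at most $q^{n/2}$; finally take $c_1$ small enough that $\log|\Sigma_V| \le c_1 n \log(9e/(\e_1 c_1)) < \tfrac{1}{4}n\log(1/q)$, after which the union bound over $\Sigma_V$ gives the claimed $e^{-cn}$. The main obstacle is that, unlike the highly compressible case where sparsity let Lemma \ref{rowbound} produce an exponential-in-$n^\d$ gain from a single row, here a single row yields only a constant-probability small-ball bound --- the random mask can wipe out enough of $\supp(y)$ that $V_i \ll V$ with non-negligible probability --- so the whole exponential gain must come from having $n$ \emph{independent} rows. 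Verifying that $p_0$ is bounded away from $0$ uniformly in $y \in \Sigma_V$ via the variance estimate above, and that the resulting $q$ beats both the Markov loss and the net entropy simultaneously, is the crux of the argument.
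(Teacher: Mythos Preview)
Your proposal is correct and follows the same overall skeleton as the paper: apply Lemma~\ref{net} with $a=c_1n$, $b=c_2n^{1-\d}$, reduce to a fixed $x\in\Sigma_{MC}$, truncate via Lemma~\ref{zero-out}, use Paley--Zygmund on $V_i=\sum_j\d_{ij}|y_j|^2$ to get a per-row small-ball estimate bounded strictly below $1$, and then tensorize over the $n$ independent rows to beat the net entropy.

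The one substantive difference is the tensorization step. The paper argues that if $\sum_i|R_i\cdot y|^2$ is small then at least $(1-\d')n$ of the terms $|R_i\cdot y|$ must individually be small, pays a $\binom{n}{\d'n}$ factor for the choice of indices, and raises the per-row bound (its Lemma~\ref{dotcontrol}, which packages Paley--Zygmund together with the Gaussian small-ball Lemma~\ref{cgaussian}) to the power $(1-\d')n$; this introduces the extra parameter $\d'$ that must be tuned in the final constant-chasing. You instead compute the Laplace transform $\bE[e^{-\l|R_i\cdot y|^2}]=\bE[1/(1+\l V_i)]$ exactly and apply exponential Markov, which avoids $\d'$ and the binomial factor altogether. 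Both routes hinge on the same Paley--Zygmund lower bound for $\bP[V_i\ge V/2]$ (uniform over $y\in\Sigma_{MC}$ precisely because $\|y\|_\infty^2\le 4/(c_2n^{1-\d})$ forces $\Var(V_i)/V^2$ to be bounded), so the ``crux'' you identify is the same one the paper resolves. Your version is a little cleaner; the paper's version makes the role of the complex-Gaussian small-ball bound (Lemma~\ref{cgaussian}) more explicit, though for this theorem only the constant-probability consequence is used.
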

Moderately compressible vectors are generic enough for us to use a more traditional argument.

From Lemma \ref{net} there exists a net $\Sigma_{MC} := \Sigma_{S_{MC}}$ such that every point in $S_{MC}$ is $3\sqrt{\e_1}$-close to a point in $\Sigma_{MC}$ and
\[
    x \in \Sigma_{MC} \implies \sum_{\frac{1}{2c_1n} < |x_i|^2 \le \frac{4}{c_2n^{1-\d}}} |x_i|^2 \ge \e_2 - 57\sqrt{\e_1}.
\]
Thus we have
\begin{align*}
    &\bP\left[\inf_{x \in \cS_{MC}}\|Ax\|_2 \le \e \text{ and } \|A\| \le Kn^{\frac{\d}{2}}\right] \\
    &\le \sum_{x \in \Sigma_{MC}} \bP\left[\|Ax\|_2 \le 4K\sqrt{\e_1}n^{\frac{\d}{2}} \right] \\
    &= \sum_{x \in \Sigma_{MC}} \bP\left[\sum^{n}_{i=1}|R_i \cdot x|^2 \le (4K\sqrt{\e_1}n^{\frac{\d}{2}})^2 \right], \quad R_i \ i\text{-th row} \\
    &\le \sum_{x \in \Sigma_{MC}} \bP\left[|R_i \cdot x|^2 \le \frac{(4K\sqrt{\e_1})^2}{n^{(1-\d)}\d'} \text {for } (1-\d')n \text{ values of } i\right], \quad \d' > 0 \\
    &\le \sum_{x \in \Sigma_{MC}} \binom{n}{(1-\d')n} \bP\left[|R_1 \cdot x|^2 \le \frac{(4K\sqrt{\e_1})^2}{n^{(1-\d)}\d'} \right]^{(1-\d')n}, \quad \d' > 0 \\
    &= \sum_{x \in \Sigma_{MC}} \binom{n}{\d' n} \bP\left[|R_1 \cdot x| \le \frac{4K\sqrt{\e_1}}{n^{\frac{1-\d}{2}}\sqrt{\d'}} \right]^{(1-\d')n}, \quad \d' > 0.
\end{align*}
To bound this probability, we will need a few intermediate results.
\begin{lem}
    \label{cgaussian}
    Let $X \sim \cN_\bC(0, \s^2)$. Then
    \[
        \bP[|X| \le \e] \le \frac{\e^2}{\s^2}.
    \]
\end{lem}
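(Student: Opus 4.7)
The plan is to use the explicit density of a complex Gaussian on $\bC$ and bound the resulting integral by trivially estimating the density by its maximum value. Recall our convention $\cN_\bC(0, \s^2)$ means $X = X_1 + iX_2$ where $X_1, X_2$ are independent real Gaussians chosen so that $\bE|X|^2 = \s^2$; this gives $X$ the Lebesgue density
\[
    f_X(z) = \frac{1}{\pi \s^2} \exp\!\left(-\frac{|z|^2}{\s^2}\right), \qquad z \in \bC.
\]

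First I would observe that $f_X$ is maximized at $z = 0$ with maximum value $1/(\pi \s^2)$. Then I would compute
\[
    \bP[|X| \le \e] = \int_{|z| \le \e} f_X(z) \, dA(z) \le \frac{1}{\pi \s^2} \cdot \operatorname{Area}(\{|z| \le \e\}) = \frac{1}{\pi \s^2} \cdot \pi \e^2 = \frac{\e^2}{\s^2},
\]
which is exactly the desired inequality.

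There is no real obstacle here; the statement is essentially the $\e^2$ analogue of the real-Gaussian small-ball bound, and the factor $\e^2$ (as opposed to $\e$) is precisely what makes the complex Ginibre least-singular-value estimate quadratic in $\e$. One could alternatively note that $|X|^2/\s^2 \sim \operatorname{Exp}(1)$, so $\bP[|X| \le \e] = 1 - e^{-\e^2/\s^2} \le \e^2/\s^2$ via $1 - e^{-t} \le t$; but the density-bound argument is more transparent and avoids introducing the exponential distribution. The lemma will subsequently be applied to $R_1 \cdot x$ conditioned on the Bernoulli mask $\{\d_{1i}\}$, where $R_1 \cdot x \sim \cN_\bC(0, \sum_i \d_{1i}|x_i|^2)$, to extract the quadratic-in-$\e$ factor that ultimately yields the $\e^2 n^{2-\d}$ term in Theorem \ref{main}.
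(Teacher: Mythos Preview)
Your proof is correct. The paper proceeds differently: it writes $X = X_1 + iX_2$ with $X_1, X_2 \sim \cN_\bR(0,\s^2/2)$ independent, observes that $\tfrac{2}{\s^2}|X|^2$ is $\chi^2$ with two degrees of freedom so that $\bP\!\big[\tfrac{2}{\s^2}|X|^2 \le t\big] = 1 - e^{-t/2}$, and then shows $1 - e^{-t/2} \le t/2$ by a short power-series manipulation. This is exactly the ``$|X|^2/\s^2 \sim \operatorname{Exp}(1)$, $1-e^{-t}\le t$'' alternative you mention and set aside. Your primary argument, bounding the density of $X$ by its maximum $1/(\pi\s^2)$ and multiplying by the area $\pi\e^2$, is shorter and avoids identifying the law of $|X|^2$ at all; the paper's route, on the other hand, gives the exact value $\bP[|X|\le\e] = 1 - e^{-\e^2/\s^2}$ before bounding it. Either approach is entirely adequate here.
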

\begin{proof}
    Let $X_1 = \Re(X)$ and $X_2 = \Im(X)$. Note $X_1, X_2 \sim \cN_\bR(0, \frac{\s^2}{2})$ and are independent. Furthermore, we have
    \[
        \frac{2}{\s^2}|X|^2 = \frac{2}{\s^2}X_1^2 + \frac{2}{\s^2}X_2^2,
    \]
    so $\frac{2}{\s^2}|X|^2$ is the $\chi^2$-distribution with 2 degrees of freedom. Consequently, we have
    \[
        \begin{split}
            \bP\left[ \frac{2}{\s^2}|X|^2 \le t \right] &= 1 - e^{-\frac{t}{2}} \\
                                                      &= e^{-\frac{t}{2}}(e^{\frac{t}{2}} - 1) \\
                                                      &= e^{-\frac{t}{2}}\left( \sum_{i=1}^\i \left( \frac{t}{2}\right)^i \frac{1}{i!} \right) \\
                                                      &= e^{-\frac{t}{2}}\frac{t}{2}\left( \sum_{i=1}^\i \frac{1}{i}\left( \frac{t}{2}\right)^{i-1} \frac{1}{(i-1)!} \right) \\
                                                      &\le e^{-\frac{t}{2}}\frac{t}{2}e^{\frac{t}{2}} \\
                                                      &= \frac{t}{2}.
        \end{split}
    \]
    Thus we obtain
    \[
        \bP[|X| \le \e] = \bP[|X|^2 \le \e^2] = \bP\left[ \frac{2}{\s^2}|X|^2 \le \frac{2\e^2}{\s^2}\right] \le \frac{\e^2}{\s^2}.
    \]
\end{proof}

\begin{lem}
    \label{sbound}
    Suppose
    \[
        \sum_{i = 1}^n a_i = a, \quad a_i \ge 0.
    \]
    Let the $b_i$'s be independent random variables such that $b_i \overset{d}{=} a_i\d_{n^{\d-1}}$. Then
    \[
        \bP\left[ \sum_{i = 1}^n b_i \le n^{\d-1}ta \right] \le 1 - (1 - t)^2 \frac{a}{a + (n^{1-\d} - 1)\max a_i}
    \]
    for $0 \le t \le 1$.
\end{lem}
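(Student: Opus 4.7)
The plan is to recognize Lemma \ref{sbound} as an instance of the second-moment (Paley--Zygmund) lower tail bound applied to the sparse sum $S := \sum_i b_i$. The right-hand side has the suggestive form $1 - (1-t)^2 \cdot (\text{something between } 0 \text{ and } 1)$, which is exactly what Paley--Zygmund produces when we write $\bP[S \le t\,\bE S] = 1 - \bP[S > t\,\bE S]$ and lower bound the latter probability.

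First I would compute the first two moments of $S$. Since $b_i \overset{d}{=} a_i \d_{n^{\d-1}}$, we have $\bE b_i = n^{\d-1} a_i$, so $\bE S = n^{\d-1} a$. Noting that $t\,n^{\d-1}a = t\,\bE S$, the target inequality becomes a bound on $\bP[S \le t \,\bE S]$. For the second moment, using independence of the $b_i$'s,
\[
    \bE[S^2] = \sum_i \bE[b_i^2] + \sum_{i \neq j} \bE[b_i]\bE[b_j] = n^{\d-1}\sum_i a_i^2 + n^{2\d-2}\Bigl(a^2 - \sum_i a_i^2\Bigr).
\]
Since $(\bE S)^2 = n^{2\d-2} a^2$, dividing gives
\[
    \frac{(\bE S)^2}{\bE[S^2]} = \frac{a^2}{a^2 + (n^{1-\d} - 1)\sum_i a_i^2}.
\]

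Next I would invoke the Paley--Zygmund inequality, which states that for any nonnegative random variable $S$ and $0 \le t \le 1$,
\[
    \bP[S > t\,\bE S] \ge (1-t)^2 \frac{(\bE S)^2}{\bE[S^2]}.
\]
Taking complements yields
\[
    \bP[S \le t\,\bE S] \le 1 - (1-t)^2 \frac{a^2}{a^2 + (n^{1-\d} - 1)\sum_i a_i^2}.
\]
Finally, the crude estimate $\sum_i a_i^2 \le (\max_i a_i)\sum_i a_i = a \cdot \max_i a_i$ monotonically worsens the ratio in the correct direction, giving
\[
    \frac{a^2}{a^2 + (n^{1-\d} - 1)\sum_i a_i^2} \ge \frac{a}{a + (n^{1-\d} - 1)\max_i a_i},
\]
which plugs in to yield the claimed inequality.

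There is no real obstacle here: the statement is essentially a one-line application of Paley--Zygmund once the moments are expanded. The only thing to be slightly careful about is the algebra $n^{\d-1} - n^{2\d-2} = n^{\d-1}(1 - n^{\d-1})$, which after division by $n^{2\d-2}$ produces the factor $n^{1-\d} - 1$ appearing in the denominator of the final bound, and the direction of the inequality when replacing $\sum a_i^2$ by its upper bound $a \max_i a_i$ (since this quantity sits in the denominator of a term being subtracted, enlarging it improves the inequality in our favor).
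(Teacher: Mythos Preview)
Your proposal is correct and matches the paper's proof essentially line for line: the paper likewise applies Paley--Zygmund (phrased via the variance), simplifies to obtain the ratio $a^2/(a^2 + (n^{1-\d}-1)\sum_j a_j^2)$, and then bounds $\sum_j a_j^2 \le a\max_j a_j$ to finish.
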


\begin{proof}
    From the Paley-Zygmund inequality, we get
    \begin{align*}
        \bP\left[ \sum_{i = 1}^n b_i \le n^{\d-1}ta \right] &\le 1 - (1 - t)^2 \frac{n^{2(\d-1)}a^2}{n^{2(\d-1)}a^2 + \Var\left(\sum b_j\right)} \\
                                                            &\le 1 - (1 - t)^2 \frac{n^{2(\d-1)}a^2}{n^{2(\d-1)}a^2 + \sum_j \left(n^{\d-1} a_j^2 - n^{2(\d-1)}a_j^2\right)} \\
                                                            &\le 1 - (1 - t)^2 \frac{a^2}{a^2 + (n^{1-\d} - 1) \sum_j a_j^2}
    \end{align*}
    from which the result follows.
\end{proof}
\begin{lem}
    \label{dotcontrol}
    For any $x \in \bC^n$, we have
    \[
        \bP\left[|R_1 \cdot x| \le \e\right] \le 1 - (1 - t)^2 \frac{\|x\|_2^2}{\|x\|_2^2 + (n^{1-\d} - 1)\sup|x_i|^2} + \frac{\e^2}{n^{\d-1}t\|x\|_2^2}
    \]
    for any $0 < t < 1$.
\end{lem}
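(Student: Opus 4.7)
The plan is to condition on the Bernoulli sparsity variables $\d_{1j}$ and split into two regimes based on the size of the resulting conditional variance. Write
\[
    R_1 \cdot x = \sum_{j=1}^n \xi_{1j} \d_{1j} x_j,
\]
and set $S = \sum_j \d_{1j} |x_j|^2$. Conditioned on the values of $\d_{11}, \dots, \d_{1n}$, the sum $R_1 \cdot x$ is a linear combination of independent complex Gaussians with deterministic coefficients, so it is distributed as $\cN_\bC(0, S)$. In particular, Lemma \ref{cgaussian} yields
\[
    \bP\bigl[|R_1 \cdot x| \le \e \,\bigm|\, \d_{11}, \dots, \d_{1n}\bigr] \le \frac{\e^2}{S}
\]
on the event $S > 0$.

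Now fix $t \in (0,1)$ and condition on whether $S$ is above or below the threshold $n^{\d-1} t \|x\|_2^2$. On the good event $\{S \ge n^{\d-1} t \|x\|_2^2\}$, the conditional bound above gives $\bP[|R_1 \cdot x| \le \e \mid \{\d_{1j}\}] \le \e^2/(n^{\d-1} t \|x\|_2^2)$, which is now a deterministic quantity. On the bad event $\{S < n^{\d-1} t \|x\|_2^2\}$, bound the probability trivially by $1$. By the law of total probability,
\[
    \bP[|R_1 \cdot x| \le \e] \le \bP\!\left[S < n^{\d-1} t \|x\|_2^2\right] + \frac{\e^2}{n^{\d-1} t \|x\|_2^2}.
\]

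Finally, apply Lemma \ref{sbound} with $a_i = |x_i|^2$, so $a = \|x\|_2^2$ and $\max a_i = \sup |x_i|^2$. This immediately gives
\[
    \bP\!\left[S < n^{\d-1} t \|x\|_2^2\right] \le 1 - (1-t)^2 \frac{\|x\|_2^2}{\|x\|_2^2 + (n^{1-\d} - 1)\sup|x_i|^2},
\]
and substituting into the previous display yields the lemma. There is essentially no obstacle here beyond assembling the two preceding lemmas in the right order; the one small thing to be careful about is the degenerate case $S = 0$ (i.e.\ all relevant $\d_{1j}$ are zero), which is harmlessly absorbed into the "bad event" bound since $\{S = 0\} \subset \{S < n^{\d-1} t \|x\|_2^2\}$ whenever $\|x\|_2 > 0$.
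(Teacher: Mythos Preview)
Your proof is correct and follows essentially the same approach as the paper: condition on the Bernoulli variables, split according to whether the conditional variance $S=\sum_j \d_{1j}|x_j|^2$ exceeds the threshold $n^{\d-1}t\|x\|_2^2$, apply Lemma~\ref{cgaussian} on the good event and Lemma~\ref{sbound} on the bad event, and combine by total probability. The paper's $\s^2$ is exactly your $S$, and the structure of the two arguments is identical.
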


\begin{proof}
    Define the random variable $\s^2 = \sum_{i: R_{1i} \neq 0} |x_i|^2$. By Lemma \ref{sbound}, we note that
    \[
        \bP\left[\s^2 \le n^{\d-1}t\|x\|_2^2\right] \le 1 - (1 - t)^2 \frac{\|x\|_2^2}{\|x\|_2^2 + (n^{1-\d} - 1)\sup|x_i|^2}. 
    \]
    Now assume that $\s^2 > n^{\d-1}t\|x\|^2_2$. Conditioning on the value of $\s^2$, we note that $R_1 \cdot x \sim \cN_\bC(0, \s^2)$. Thus Lemma \ref{cgaussian} implies
    \[
        \bP[|R_1 \cdot x| \le \e] \le \frac{\e^2}{\s^2} \le \frac{\e^2}{n^{\d-1}t\|x\|^2_2}
    \]
    Lemma \ref{dotcontrol} then follows from the union bound.
\end{proof}

Let us fix an arbitrary $x \in \Sigma_{MC}$ and let $y$ denote the vector $x$ with all entries $|x_i|^2 \ge \frac{4}{c_2n^{1-\d}}$ set to 0. Applying Lemma \ref{zero-out} and Lemma \ref{dotcontrol} then allows us to conclude that
\begin{align*}
    \bP\left[|R_1 \cdot x| \le \frac{4K\sqrt{\e_1}}{n^{\frac{1-\d}{2}}\sqrt{\d'}}\right] &\le \bP\left[|R_1 \cdot y| \le \frac{4K\sqrt{\e_1}}{n^{\frac{1-\d}{2}}\sqrt{\d'}}\right] \\
                                                                                         &\le 1 - (1 - t)^2 \frac{\|y\|_2^2}{1 + (n^{1-\d} - 1)\sup|y_i|^2} + \frac{16K^2\e_1}{t\|y\|_2^2\d'} \\
                                                                                         &\le 1 - (1 - t)^2 \frac{\e_2 - 57\sqrt{\e_1}}{1 + \frac{4}{c_2} - \frac{4}{c_2n^{1-\d}}} + \frac{16K^2\e_1}{t(\e_2 - 57\sqrt{\e_1})\d'}.
\end{align*}
As
\[
    |\Sigma_{MC}| \le \left(\frac{3}{\sqrt{\e_1}}\right)^{2c_1n}\left(\frac{e}{c_1}\right)^{c_1n} = \left(\frac{3}{\sqrt{\e_1}}\right)^{2c_1n}\left(\exp(c_1 (1 - \ln c_1))\right)^n
\]
and
\[
    \binom{n}{\d' n} \le \left(\frac{e}{\d'}\right)^{\d' n} = \left(\exp(\d' (1 - \ln \d'))\right)^n,
\]
to prove Theorem \ref{msparse} it suffices to choose appropriate values for $t, \e_1, \e_2, c_1, c_2$ and $\d'$ such that
\[
    \begin{split}
    &\left(\frac{3}{\sqrt{\e_1}}\right)^{2c_1n}\left(\exp(c_1 (1 - \ln c_1))\right)^n\left(\exp(\d' (1 - \ln \d'))\right)^n\\
    &\cdot \left(1 - (1 - t)^2 \frac{\e_2 - 57\sqrt{\e_1}}{1 + \frac{4}{c_2} - \frac{4}{c_2n^{1-\d}}} + \frac{16K^2\e_1}{t(\e_2 - 57\sqrt{\e_1})\d'}\right)^{(1-\d')n} \le e^{-cn}
    \end{split}
\]
for some $c > 0$ and sufficiently large $n$.

We will make our choices in the following (chronological) manner:
\begin{enumerate}
    \item Pick $c_2 > 0$ and pick $t \in (0, 1)$ such that $(1 - t)^2 = 0.5$.
    \item Pick $\e_2 > 0$ small enough such that if $\e_1$ is chosen to be $< \e_2$, then $\e_1 + \e_2$ is small enough for the proof of Theorem \ref{hsparse} to be valid.
    \item Pick $\d' > 0$ small enough such that
        \[
            \exp\left(\frac{\d'(1-\ln\d')}{1-\d'}\right) \cdot \left(1 - 0.4 \frac{\e_2}{1 + \frac{4}{c_2} - \frac{4}{c_2n^{1-\d}}}\right) < 1 - 0.3 \frac{\e_2}{1 + \frac{4}{c_2} - \frac{4}{c_2n^{1-\d}}}.
        \]
        Note this is possible as $\lim_{\d' \to 0} \d'(1 - \ln\d') = 0$.
    \item Pick $\e_1 > 0$ small enough such that
        \[
            1 - 0.5 \frac{\e_2 - 57\sqrt{\e_1}}{1 + \frac{4}{c_2} - \frac{4}{c_2n^{1-\d}}} + \frac{16K^2\e_1}{t(\e_2 - 57\sqrt{\e_1})\d'} < 1 - 0.4 \frac{\e_2}{1 + \frac{4}{c_2} - \frac{4}{c_2n^{1-\d}}}.
        \]
    \item Finally, pick $c_1 > 0$ small enough such that
        \[
            \left(\frac{3}{\sqrt{\e_1}}\right)^{2c_1n} \left(1 - 0.3 \frac{\e_2}{1 + \frac{4}{c_2} - \frac{4}{c_2n^{1-\d}}}\right)^{(1 - \d')n} < \left(1 - 0.2 \frac{\e_2}{1 + \frac{4}{c_2} - \frac{4}{c_2n^{1-\d}}}\right)^{(1 - \d')n}
        \]
        and
        \[
            \exp\left(\frac{c_1(1-\ln c_1)}{1-\d'}\right) \cdot \left(1 - 0.2 \frac{\e_2}{1 + \frac{4}{c_2} - \frac{4}{c_2n^{1-\d}}}\right) < 1 - 0.1 \frac{\e_2}{1 + \frac{4}{c_2} - \frac{4}{c_2n^{1-\d}}}.
        \]
\end{enumerate}
This proves Theorem \ref{msparse}.

\section{Incompressible Vectors} \label{ic}
Finally, we turn our attention to incompressible vectors. Let $Y_1$ denote the first column of $A$.
\begin{thm}
    \label{incomp}
    There exists a constant $c > 0$ such that for sufficiently large $n$ and any $\eta \in \cS_{IC}$ we have
    \[
        \bP\left[| \langle Y_1, \eta \rangle | < t \right] \le e^{-cn^\d} + O(t^2n^{1-\d}).
    \]
\end{thm}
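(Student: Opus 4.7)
The plan is to condition on the sparsity pattern of $Y_1$ and exploit Gaussianity. Using the notation of Section~\ref{generic}, set $\hat\eta_i = \d_{i1}\eta_i$, so that $\langle Y_1, \eta\rangle = \sum_i \xi_{i1}\d_{i1}\bar\eta_i$ is, conditionally on $(\d_{i1})_i$, distributed as $\cN_\bC(0, \|\hat\eta\|_2^2)$. Lemma~\ref{cgaussian} then yields
\[
    \bP\left[|\langle Y_1, \eta\rangle| < t \mid (\d_{i1})_i \right] \le \frac{t^2}{\|\hat\eta\|_2^2}.
\]
Splitting according to whether $\|\hat\eta\|_2^2$ is of its expected order $n^{\d-1}$ or not, and integrating out the $(\d_{i1})_i$, one obtains
\[
    \bP[|\langle Y_1, \eta\rangle| < t] \le \bP\left[\|\hat\eta\|_2^2 < c'n^{\d-1}\right] + \frac{t^2}{c'n^{\d-1}},
\]
so it suffices to show $\bP[\|\hat\eta\|_2^2 < c'n^{\d-1}] \le e^{-cn^\d}$ for some constants $c', c > 0$.

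For this lower-tail estimate, the incompressibility of $\eta$ is essential. Let $I = \{i : |\eta_i|^2 \le 1/(c_1 n)\}$; by the definition of $\cS_{IC}$ we have $\sum_{i \in I}|\eta_i|^2 \ge \e_1$. The restricted sum
\[
    S = \sum_{i \in I} \d_{i1}|\eta_i|^2 \le \|\hat\eta\|_2^2
\]
is a sum of independent nonnegative random variables with $\bE S \ge \e_1 n^{\d-1}$, each summand bounded by $1/(c_1 n)$, and total variance $\Var(S) \le n^{\d-1}\sum_{i \in I}|\eta_i|^4 \le n^{\d-2}/c_1$. Applying Bernstein's inequality to control the event $\{S \le \e_1 n^{\d-1}/2\}$ then yields the desired $\exp(-\Omega(n^\d))$ bound, completing the proof.

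The main obstacle here is picking the right concentration tool. The Paley-Zygmund bound in Lemma~\ref{sbound} only gives a constant failure probability, and Hoeffding's inequality is too lossy when $\d < 1/2$, since it compares the deviation scale $n^{\d-1}$ against the $L^\infty$ summand-bound $1/n$. Bernstein's inequality is just sharp enough because it sees the smaller variance scale $n^{\d-2}$ produced by the sparsity parameter $p = n^{\d-1}$; this is exactly what converts the Paley-Zygmund-style constant bound into the $e^{-cn^\d}$ exponential tail that matches the statement.
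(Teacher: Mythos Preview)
Your proposal is correct and follows the same overall structure as the paper's proof: condition on the sparsity pattern $(\d_{i1})_i$, apply Lemma~\ref{cgaussian}, and reduce to showing $\bP[\|\hat\eta\|_2^2 < c' n^{\d-1}] \le e^{-cn^\d}$.

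The only difference is in how that last lower-tail bound is obtained. The paper does not apply Bernstein to the weighted sum; instead it first extracts a level set. From $\sum_{|\eta_i|^2 \le 1/(c_1 n)} |\eta_i|^2 \ge \e_1$ one pigeonholes to find $\ge \l_0 n$ indices $i$ with $|\eta_i|^2 \ge 1/(\l_1 n)$ (discard the coordinates with $|\eta_i|^2 < \e_1/(2n)$, which carry at most $\e_1/2$ of the mass). On that index set the survivors $\{i : \d_{i1}=1\}$ form a binomial with mean $\ge \l_0 n^\d$, and the multiplicative Chernoff bound gives $e^{-\Omega(n^\d)}$ failure probability; on the good event $\|\hat\eta\|_2^2 \ge \tfrac{\l_0}{2\l_1} n^{\d-1}$. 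Your Bernstein computation achieves the same rate without the level-set step, at the price of invoking a slightly heavier concentration tool; the paper's route is more elementary (only Chernoff for i.i.d.\ Bernoullis) but requires the extra pigeonhole. Either way the argument is short and the conclusions are identical.
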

\begin{proof}
    Notice there exists at least $\l_0n$ values of $i$ such that
    \[
        |\eta_{i}|^2 \geq \frac{1}{\l_1n}
    \]
    for some contants $\l_0, \l_1 > 0$ depending on $c_1, \e_1$. Let $S$ denote the set of such indices $i$. From the multiplicative Chernoff bound, we obtain
    \[
        \bP\left[\left|\left\{i \in S : Y_i \neq 0\right\}\right| \le \frac{1}{2}\l_0n^\d \right] \le \exp\left(-\frac{1}{8}\l_0n^\d\right).
    \]
    Let us now condition on the event that
    \[
        \left|\left\{i \in S : Y_i \neq 0\right\}\right| > \frac{1}{2}\l_0n^\d.
    \]
    Then by Lemma \ref{cgaussian} we conclude
    \[
        \bP\left[| \langle Y_1, \eta \rangle | < t \right] \le \frac{2\l_1}{\l_0}t^2n^{1-\d}.
    \]
    Theorem \ref{incomp} then follows by a union bound.
\end{proof}

\section{Proof of Theorem \ref{mainwnorm}} \label{mainwnormproof}
We are now ready to give a proof of Theorem \ref{mainwnorm}.
\begin{proof}
    Note we can rewrite
    \[
        \begin{split}
            \bP\left[\s_n(A) \le \e \text{ and } \|A\| \le Kn^{\frac{\d}{2}}\right] &= \bP\left[\inf_{x \in \cS^{n-1}}\|Ax\|_2 \le \e \text{ and } \|A\| \le Kn^{\frac{\d}{2}}\right] \\
                                                                                    &\le \sum_{i \in \{HC, MC, IC\}} \bP\left[\inf_{x \in \cS_i}\|Ax\|_2 \le \e \text{ and } \|A\| \le Kn^{\frac{\d}{2}}\right].
        \end{split}
    \]
    After applying Theorems \ref{hsparse} and \ref{msparse}, to prove Theorem \ref{mainwnorm} it suffices to show
    \[
        \bP\left[\inf_{x \in \cS_{IC}}\|Ax\|_2 \le \e \text{ and } \|A\| \le Kn^{\frac{\d}{2}}\right] \le e^{-cn^{\d}} + O(\e^2n^{2 - \d})
    \]
    for some $c > 0$ and sufficiently large $n$. Let $E$ denote the event that there exists $x \in \cS_{IC}$ such that
    \[
        \|Ax\|_2 \le \e, \quad \|A\| \le Kn^{\frac{\d}{2}}.
    \]
    Let $Y_i$ denote the $i$-th column of $A$ and let $W_i$ denote the span of the $Y_j$'s for $j \neq i$. Observe that
    \[
        |x_i|\dist(Y_i, W_i) \le \e.
    \]
    As $x \in \cS_{IC}$, there exists at least $\l_0n$ values of $i$ such that
    \[
        |x_i|^2 \ge \frac{1}{\l_1n}
    \]
    for some constants $\l_0, \l_1 > 0$ depending on $c_1, \e_2$. Notice such condition implies that $\dist(Y_i, W_i) \le \e\sqrt{\l_1n}$. Let $E_i$ denote the event that both this and $E$ occur, and note that
    \[
        \bP(E) \le \frac{1}{\l_0n} \sum_i \bP[E_i] = \frac{1}{\l_0} \bP[E_1]
    \]
    by double counting.

    As $| \langle Y_1 \cdot \eta_1 \rangle| \le \dist(Y_1, W_1)$ for $\eta_1 \perp W_1$, $\eta_1 \in \cS^{n-1}$, we conclude that
    \begin{align*}
        \bP(E_1) \le \bP\Big[&\forall \eta_1 \in \cS^{n-1} \text{ s.t. } B\eta_1 = 0 \text{ we have } | \langle Y_1 \cdot \eta_1 \rangle| \le \e\sqrt{\l_1n} \\
                           &\text{and } \|B\|_{op} \le Kn^\frac{\d}{2}\Big].
    \end{align*}
    By applying Theorems \ref{hsparse} and \ref{msparse} but with $A$ replaced with $B$, we get
    \[
        \bP\left[\inf_{\eta \in \cS^{n-1} - \cS_{IC}} \|B\eta\|_2 = 0 \text{ and } \|B\|_{op} \le Kn^\frac{\d}{2}\right] \le e^{-cn^\d}.
    \]
    for some $c > 0$ and sufficiently large $n$. (The same arguments work for $(n-1) \times n$ matrices.) Therefore, it suffices to bound
    \[
        \bP\left[\forall \eta \in \cS_{IC} \text{ we have } | \langle Y_1 \cdot \eta \rangle| \le \e\sqrt{\l_1n} \right].
    \]
    However, if $\eta \in \cS_{IC}$, then Theorem \ref{incomp} implies
    \[
        \bP\left[| \langle Y_1 \cdot \eta \rangle| \le \e\sqrt{\l_1n} \right] \le e^{-cn^\d} + O(\e^2n^{2-\d})
    \]
    for some $c > 0$ and sufficiently large $n$. Theorem \ref{mainwnorm} follows.
\end{proof}

\section{Operator Norm Bound} \label{mainproof}
To prove Theorem \ref{main}, it remains to prove Theorem \ref{opbound}. To do this, we will use a result that follows from Theorem 1.7 in \cite{BaRu17}:
\begin{lem}
    \label{ruop}
    Let $A'$ be the $n \times n$ random matrix in Theorem \ref{main} but with
    \[
        \xi_{ij} \sim \cN_\bR(0,1), \quad \d_{ij} \overset{d}{=} \d_{n^{\d-1}}.
    \]
    Then there exists constants $K', c' > 0$ such that for sufficiently large $n$ we have
    \[
        \bP[\|A'\|_{op} \ge K'n^{\frac{\d}{2}}] \le e^{-c'n^\d}.
    \]
\end{lem}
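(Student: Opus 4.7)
The plan is to apply Theorem 1.7 of \cite{BaRu17} essentially as a black box. That theorem provides an operator-norm bound for matrices with i.i.d.\ entries of the form $\xi_{ij}\d_{ij}$ whenever $\xi_{ij}$ has bounded sub-Gaussian norm and $\d_{ij} \sim \d_p$ with $p \ge C\log n / n$. Since a real standard Gaussian is sub-Gaussian with absolute-constant norm, and $p = n^{\d - 1}$ manifestly satisfies $p \gg \log n / n$ for large $n$, the hypotheses are met and the theorem yields
\[
    \bP\bigl[\|A'\|_{op} \ge K'\sqrt{np}\bigr] \le e^{-c'np}
\]
for some absolute constants $K', c' > 0$. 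Substituting $np = n^{\d}$ gives the claimed bound.

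For a self-contained sketch, the natural route is a net argument. Take a $\tfrac{1}{2}$-net $\cN_0$ of $\cS^{n-1}$ of cardinality at most $5^{2n}$ and reduce $\|A'\|_{op}$ to $\sup_{x, y \in \cN_0} |\langle y, A' x\rangle|$ up to a universal factor. For a fixed pair $(x,y)$, condition on the Bernoulli mask: $\langle y, A' x\rangle \sim \cN_\bR(0, V)$ with $V = \sum_{i,j} y_i^2 x_j^2 \d_{ij}$. One would show $V$ concentrates near its mean $p$ (Bernstein) and then use the Gaussian tail in $V$, followed by a union bound over the net.

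The main obstacle, and the reason one cannot just cite off-the-shelf matrix concentration, is that the entry $\xi_{ij}\d_{ij}$ has sub-Gaussian norm of order $1$, not $\sqrt p$. A direct invocation of Bai--Yin or a Latała-type moment bound therefore gives only $\|A'\|_{op} \lesssim \sqrt n$, which is a factor of $n^{(1-\d)/2}$ too large. To recover the correct $\sqrt{np} = n^{\d/2}$ scaling one has to partition the sphere by sparsity level of the test vector and pay only the entropy appropriate to each piece, coupling the concentration of $V$ with the sparsity stratum. This multi-scale decomposition is exactly what is carried out in \cite{BaRu17}, so I would simply quote their theorem rather than reprove it, noting that the argument there is insensitive to whether the sub-Gaussian entries are Gaussian.
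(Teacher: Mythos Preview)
Your proposal is correct and matches the paper's approach exactly: the paper does not give a proof of this lemma at all, but simply states that it ``follows from Theorem 1.7 in \cite{BaRu17},'' which is precisely the black-box citation you propose. Your additional sketch of the net argument and your discussion of why naive concentration gives the wrong scale are accurate and go beyond what the paper itself provides.
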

Write
\[
    A = \Re(A) + i\Im(A)
\]
where $\Re(A)$ denotes the real part of $A$ and $\Im(A)$ denotes the imaginary part of $A$. Observe $\Re(A), \Im(A)$ are i.i.d. random matrices. Furthermore, from Lemma \ref{ruop} there exists $K', c' > 0$ such that
\begin{align*}
    & \bP\left[\|\Re(A)\|_{op} \ge \frac{K'}{\sqrt{2}}n^{\frac{\d}{2}}\right] \le e^{-c'n^\d} \\
    & \bP\left[\|\Im(A)\|_{op} \ge \frac{K'}{\sqrt{2}}n^{\frac{\d}{2}}\right] \le e^{-c'n^\d}.
\end{align*}
As $\|A\|_{op} \le \|\Re(A)\|_{op} + \|\Im(A)\|_{op}$, the union bound gives us
\[
    \bP\left[\|A\|_{op} \ge \sqrt{2} K'n^{\frac{\d}{2}}\right] \le 2e^{-c'n^\d},
\]
proving Theorem \ref{opbound} and thus Theorem \ref{main}.

\section{Discussion On Sparse Regularization}
When a matrix has undesirable properties (such as a small least singular value), it is often useful to add a random regularizing perturbation. That is, given a matrix $M \in \bC^{n \times n}$ one often studies $M + \l A$, where $A$ is a random matrix with independent entries and $\l > 0$ is a small constant. (For an example of this technique in action, see \cite{ST04}.)

In many cases, $A$ is chosen from the complex Ginibre ensemble. The original motivation of this paper was the following question:
\begin{quote}
    What happens if the perturbation $A$ is a random sparse matrix?
\end{quote}
Theorem \ref{main} states that for a certain type of random sparse matrix $A$, we have
\[
    \bP[\s_n(0 + \l A) \le \e] \le e^{-cn^\d} + O_n\left(\frac{\e^2}{\l^2}\right).
\]
Intuitively, as $M = 0$ is ``as singular as a matrix can be," one may hope that $M = 0$ is a worst-case scenario and be able to conclude that
\[
    \bP\left[\s_{\text{n}}(M + \l A) \le \e\right] \le e^{-cn^\d} + O_n\left(\frac{\e^2}{\l^2}\right), \quad \forall M \in \bC^{n \times n}.
\]
Indeed, such shift-independence is true for $A$ drawn from the Ginibre ensemble. However, in this section, we will show that Theorem \ref{main} is not shift-independent.

In fact, we prove a stronger statement:
\begin{thm}
    \label{shift}
    Suppose $A$ is a $n \times n$ random matrix with independent entries where each entry has variance $< K$. If $\bP(a_{ij} = 0) = P > 0$ for any $a_{ij}$, then there exists a matrix $M$ with arbitrarily large $t = \|M\|_{op}$ such that
    \[
        \bP\left[\s_n(M + \l A) \le \frac{C\l^2n^{\frac{3}{2}}}{t} \right] \ge 0.99P
    \]
    where $C > 0$ depends only on $P$ and $K$.
\end{thm}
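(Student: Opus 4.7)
The plan is to take $M = te_1 e_1^T$, a rank-one matrix of operator norm $t$ supported on a single entry, and to exploit the sparsity of $A$ through the event $E = \{a_{11}=0\}$, which by independence of entries has probability exactly $P$. Conditioned on $E$, the matrix $M+\lambda A$ admits the block structure
\[
M+\lambda A = \begin{pmatrix} t & \lambda r^T \\ \lambda c & \lambda B \end{pmatrix},
\]
with $r, c \in \bC^{n-1}$ random vectors and $B$ an $(n-1) \times (n-1)$ random block whose entries are independent copies of the generic entry of $A$.

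I would then exhibit an explicit near-kernel unit vector $v = \binom{\alpha}{v'}$. The natural Schur-complement choice is $v' = -\alpha B^{-1}c$, which zeros out the bottom block of $(M+\lambda A)v$ and leaves the scalar residual $\alpha(t - \lambda r^T B^{-1}c)$ in the first coordinate. The normalization $\|v\|^2 = |\alpha|^2(1 + \|B^{-1}c\|^2) = 1$ then yields the upper bound
\[
\sigma_n(M+\lambda A) \le \frac{|t - \lambda r^T B^{-1}c|}{\sqrt{1 + \|B^{-1}c\|^2}}.
\]
The bulk of the work is to estimate the Gaussian quantities $\|B^{-1}c\|$ and $r^T B^{-1}c$ via standard anti-concentration (in the spirit of Lemmas~\ref{cgaussian} and~\ref{dotcontrol}) together with the typical bulk estimate $\sigma_{n-1}(B)\sim 1/\sqrt{n}$. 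This should produce a conditional probability at least $0.99$ on which the displayed bound is at most $C\lambda^2 n^{3/2}/t$, and multiplying by $\bP(E) = P$ then gives the claimed $0.99P$ lower bound.

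The main obstacle is obtaining the exact exponent $\lambda^2 n^{3/2}/t$, which is more refined than the naive Weyl interlacing bound $\sigma_n(M+\lambda A) \le \sigma_{n-1}(\lambda A) \sim \lambda/\sqrt{n}$ that would apply to any rank-one perturbation. The stronger bound must come from using the large scale $t$ of $M$ to cancel a leading piece of the residual $t - \lambda r^T B^{-1}c$, so that only a second-order Gaussian fluctuation of size $\lambda^2$ survives, then weighted by the $n^{3/2}$ coming from $\|B^{-1}c\|$ and related bilinear-form sizes. The remainder of the argument is a union bound over the conditional Gaussian event and the event $E$, plus the a priori estimate $\|A\|_{\mathrm{op}} \lesssim \sqrt n$ needed to justify treating $B$ as typical.
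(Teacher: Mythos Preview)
Your approach has a genuine gap: with the rank-one choice $M = t e_1 e_1^T$, the Schur-complement bound
\[
\sigma_n(M+\lambda A) \le \frac{|t - \lambda r^T B^{-1}c|}{\sqrt{1+\|B^{-1}c\|^2}}
\]
cannot produce the scaling $\lambda^2 n^{3/2}/t$. The quantities $r, B, c$ are built from $A$ alone and do not depend on $t$, so for large $t$ the numerator is $t + O_{\lambda,n}(1)$ and the whole expression is of order $t/\max(1,\|B^{-1}c\|)$, which \emph{grows} with $t$ rather than decaying like $1/t$. There is no ``leading piece'' of $t$ to cancel: $\lambda r^T B^{-1} c$ is a fixed-distribution random variable, not something that tracks $t$. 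The hoped-for second-order $\lambda^2$ residual therefore never materializes, and no choice of test vector for this rank-one $M$ will fix this, since the bottom $(n-1)$ rows of $M+\lambda A$ are just $\lambda$ times rows of $A$ and carry no $1/t$.

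The paper's construction is essentially the opposite extreme: it takes $M$ to be $t\cdot\mathrm{Id}$ with one diagonal entry zeroed out (after permuting so that the special entry is $a_{nn}$), a matrix of rank $n-1$. The test vector is $x=\bigl(-\tfrac{\lambda}{t}a_{1n},\dots,-\tfrac{\lambda}{t}a_{(n-1)n},1\bigr)^T$, chosen so that $Mx$ exactly cancels $\lambda$ times the last column of $A$ on the event $\{a_{nn}=0\}$. What survives is $(M+\lambda A)x=\lambda A(x-e_n)$, and since $x-e_n$ has entries of size $\lambda/t$, this gives $\|(M+\lambda A)x\|_2=\tfrac{\lambda^2}{t}\|A'a\|_2$ for a submatrix $A'$ and subvector $a$ of $A$. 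A crude Markov bound on the Hilbert--Schmidt norm then yields $\|A'a\|_2\le C' n^{3/2}$ with probability at least $1-0.01P$. The $1/t$ factor enters because $M$ is being inverted on its large rank-$(n-1)$ range, not through any cancellation in a scalar residual.

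Two secondary issues in your sketch would also need repair even if the main idea worked. First, the hypothesis is only that the entries have variance bounded by $K$; there is no Gaussian or sub-Gaussian assumption, so appeals to Lemmas~\ref{cgaussian} and~\ref{dotcontrol}, to $\sigma_{n-1}(B)\sim 1/\sqrt n$, or to $\|A\|_{op}\lesssim\sqrt n$ are unjustified. Second, the block $B$ need not be invertible at all. The paper avoids both problems by using only second moments and never inverting any random block.
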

\begin{remark}
    In Theorem \ref{shift}, we do not require $A$ to have identically distributed entries. Also the power of $n$ has not been optimized.
\end{remark}
Observe Theorem \ref{shift} applies if \textit{any} entry of $A$ has a nonzero chance of being 0. This includes having a random matrix with i.i.d. $\cN_\bC(0, 1)$ entries, but with just \textit{one} entry randomly set to 0. 

\begin{proof}
    Recall that
    \[
        \s_{n}(M + \l A) = \inf_{x \in \cS^{n-1}} \|(M + \l A)x\|_2
    \]
    where $\cS^{n-1} \subset \bC^n$ denotes the unit sphere. For any deterministic unitary matrices $U$ and $V$, observe that
    \begin{align*}
        \inf_{x \in \cS^{n-1}} \|(M + \l A)x\|_2 &= \inf_{x \in \cS^{n-1}} \|V^*(M + \l A)Ux\|_2 \\
                                                 &= \inf_{x \in \cS^{n-1}} \|V^*MU + \l V^*AUx\|_2.
    \end{align*}
    By an appropriate choice of $U$ and $V$ to permute the rows and columns of $A$, we can set
    \[
        (V^*AU)_{nn} = a_{ij}.
    \]
    Consequently, it suffices to prove Theorem \ref{main} with $i = j = n$.

    Let us define
    \[
        M =
        \begin{pmatrix}
            t & 0 & 0 & \dots & 0 & 0\\
            0 & t & 0 & \dots & 0 & 0\\
            0 & 0 & t & \dots & 0 & 0\\
            0 & 0 & 0 & \ddots & t & 0\\
            0 & 0 & 0 & \dots & 0 & 0\\
        \end{pmatrix}.
    \]
    In other words, $M = t \cdot \text{Id}$ but with the lower right corner entry zeroed out. Let us also define
    \[
        x =
        \begin{pmatrix}
            -\frac{\l}{t}a_{1n} \\
            -\frac{\l}{t}a_{2n} \\
            \vdots \\
            -\frac{\l}{t}a_{(n-1)n} \\
            1
        \end{pmatrix}.
    \]
    Observe now that
    \[
        Mx = 
        \begin{pmatrix}
            t & 0 & 0 & \dots & 0 & 0\\
            0 & t & 0 & \dots & 0 & 0\\
            0 & 0 & t & \dots & 0 & 0\\
            0 & 0 & 0 & \ddots & t & 0\\
            0 & 0 & 0 & \dots & 0 & 0\\
        \end{pmatrix}
        \begin{pmatrix}
            -\frac{\l}{t}a_{1n} \\
            -\frac{\l}{t}a_{2n} \\
            \vdots \\
            -\frac{\l}{t}a_{(n-1)n} \\
            1
        \end{pmatrix}
        = -\l
        \begin{pmatrix}
            a_{1n} \\
            a_{2n} \\
            \vdots \\
            a_{(n-1)n} \\
            0
        \end{pmatrix}.
    \]
    In the event that $a_{nn} = 0$ we have
    \begin{align*}
        \|(M + \l A)x\|_2 = \|Mx + \l Ax\|_2
        &=
        \left\|
        \l
        \begin{pmatrix}
            a_{11} & \dots & a_{1(n-1)}\\
            a_{21} & \dots & a_{2(n-1)}\\
            \vdots & \ddots & \vdots \\
            a_{n1} & \dots & a_{n(n-1)}\\
        \end{pmatrix}
        \begin{pmatrix}
            -\frac{\l}{t}a_{1n} \\
            -\frac{\l}{t}a_{2n} \\
            \vdots \\
            -\frac{\l}{t}a_{(n-1)n} \\
        \end{pmatrix}
        \right\|_2 \\
        &=
        \frac{\l^2}{t}
        \left\|
        \begin{pmatrix}
            a_{11} & \dots & a_{1(n-1)}\\
            a_{21} & \dots & a_{2(n-1)}\\
            \vdots & \ddots & \vdots \\
            a_{n1} & \dots & a_{n(n-1)}\\
        \end{pmatrix}
        \begin{pmatrix}
            a_{1n} \\
            a_{2n} \\
            \vdots \\
            a_{(n-1)n} \\
        \end{pmatrix}
        \right\|_2.
    \end{align*}
    A straightforward application of Markov's inequality yields
    \begin{align*}
        \left\|
        \begin{pmatrix}
            a_{11} & \dots & a_{1(n-1)}\\
            a_{21} & \dots & a_{2(n-1)}\\
            \vdots & \ddots & \vdots \\
            a_{n1} & \dots & a_{n(n-1)}\\
        \end{pmatrix}
        \right\|_{op} &\le
        \left\|
        \begin{pmatrix}
            a_{11} & \dots & a_{1(n-1)}\\
            a_{21} & \dots & a_{2(n-1)}\\
            \vdots & \ddots & \vdots \\
            a_{n1} & \dots & a_{n(n-1)}\\
        \end{pmatrix}
        \right\|_{\text{HS}} \le C'n, \\
        \left\|
        \begin{pmatrix}
            a_{1n} \\
            a_{2n} \\
            \vdots \\
            a_{(n-1)n} \\
        \end{pmatrix}
        \right\|_2 &\le C'\sqrt{n}
    \end{align*}
    with probability $1 - 0.01P$ for a sufficiently large $C'$ depending only on $K$ and $P$. Thus with probability $0.99P$ we have
    \[
        \s_{n} (M + \l A) \le \frac{\|(M + \l A)x\|_2}{\|x\|_2} \le C'^2n^\frac{3}{2}\frac{\l^2}{t},
    \]
    as desired.
\end{proof}
Consequently, a correct generalization of the lower tail estimate in Theorem \ref{main} to the shifted random matrix $M + \l A$ will depend on $M$ or some property of $M$ (perhaps, say, the norm $\|M\|_{op}$). Thus a natural follow-up question is to ask:
\begin{quote}
    How does $\bP[\s_n(M + \l A) < \e]$ vary with $M$?
\end{quote}

\section{Acknowledgements}
This work was done as part of my senior thesis at UC Berkeley. First and foremost, I would like to thank my wonderful advisor, Professor Nikhil Srivastava, who suggested this problem and whose guidance and encouragement made this all possible. I would also like to thank Professor Mark Rudelson for providing thoughtful feedback on an earlier version of this thesis.

\newpage
\bibliographystyle{alpha}
\bibliography{ref}

\end{document}